\newtheorem{theorem}{Theorem}[section]
\newtheorem{lemma}[theorem]{Lemma}
\newtheorem{corollary}[theorem]{Corollary}
\newtheorem*{conjecture}{Conjecture}
\theoremstyle{definition}
\newtheorem*{claim}{Claim}
\newtheorem{definition}[theorem]{Definition}
\theoremstyle{remark}
\newtheorem{remark}{Remark}
\numberwithin{equation}{section}
\newcommand{\Norm}[1]{\lVert#1\rVert}
\newcommand{\norm}[1]{\lvert#1\rvert}
\newcommand{\cL}{\mathcal{L}}
\newcommand{\N}{\mathcal{N}}
\newcommand{\W}{\mathcal{W}}
\newcommand{\Q}{\mathrm{Q}}
\newcommand{\R}{\mathbb{R}}
\newcommand{\s}{\subset}
\newcommand{\p}{\partial}
\renewcommand{\log}{\text{log}}
\newcommand{\txt}[1]{\text{#1}}
\newcommand{\dist}{\text{dist}}
\begin{document}

\title[Unique continuation at the boundary]{UNIQUE CONTINUATION AT THE BOUNDARY \\FOR DIVERGENCE FORM ELLIPTIC EQUATIONS ON QUASICONVEX DOMAINS}

\author{Yingying Cai}
\address{Yingying Cai: Academy of Mathematics and Systems Science, Chinese Academy of Sciences, Beijing, China.}
\email{caiyingying@amss.ac.cn}

\begin{abstract}
Let $\Omega\subset \R^d$ be a  quasiconvex  Lipschitz domain and $A(x)$ be a $d\times d$ uniformly eliiptic, symmetric matrix with Lipschitz coefficients. Assume nontrivial $u$ solves $-\nabla \cdot(A(x)\nabla u)=0$ in $\Omega$, and $u$ vanishes on $\Sigma=\p\Omega\cap B$ for some ball $B$. The main contribution of this paper is to demonstrate the existence of a countable collection of open balls $(B_i)_i$ such that the restriction of $u$ to $B_i \cap \Omega$ maintains a consistent sign. Furthermore, for any compact subset $K$ of $\Sigma$, the set difference $K \setminus \bigcup_i B_i$ is shown to possess a Minkowski dimension that is strictly less than $d - 1 - \epsilon$. As a consequence, we prove Lin's conjecture in quasiconvex domains.

\end{abstract}
\maketitle
\section{INTRODUCTION}
\subsection{Motivation} 
 Unique continuation is a fundamental property for elliptic equations. A closely related problem is that of boundary unique continuation. L.Bers conjectured that for a harmonic function $u\in C^1(\overline{\R^d_+})$ in the upper half space $\R^d_+,$ if $u=\partial_d u=0$ on a subset $U\subset\partial \R^d_+$ with positive surface measure, then $u$  is necessarily identically zero  in $\R^d_+$. However, Bourgain-Wolff \cite{BW90}  provided a  counterexample in $C^1(\overline{\R^d_+})$. In light of this counterexample, a related conjecture by Lin which is still open is the following: 

\begin{conjecture}[Lin's conjecture]
    Let u be a bounded harmonic function in a Lipschitz domain $\Omega\subset \R^d$. Suppose that u vanishes on a relatively open set $U\subset\partial\Omega$ and $\nabla u$ vanishes in a subset of $U $ with positive surface measure. Then $u=0$ in $\Omega$.
\end{conjecture}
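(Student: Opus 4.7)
The plan is to derive Lin's conjecture directly from the main theorem of the paper by a contradiction argument, coupled with a Hopf-type boundary estimate adapted to quasiconvex Lipschitz domains. Suppose, toward a contradiction, that the bounded harmonic function $u$ satisfies the hypotheses of Lin's conjecture but is nontrivial. Let $E \subset U$ denote the subset on which the nontangential limit of $\nabla u$ vanishes; by hypothesis $E$ has positive $(d-1)$-dimensional surface measure. Pick a ball $B$ so that $\Sigma := \p\Omega \cap B \subset U$ contains a compact subset $K \subset E$ of positive surface measure, and apply the main theorem with $A = I$ to obtain the countable family $(B_i)_i$. The Minkowski dimension estimate forces $K \setminus \bigcup_i B_i$ to have zero $(d-1)$-dimensional Hausdorff measure, so there must exist an index $i$ and a point $x_0 \in K \cap B_i$.

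After a possible sign reversal, $u \ge 0$ throughout $B_i \cap \Omega$. If $u \equiv 0$ on $B_i \cap \Omega$, then interior unique continuation would force $u \equiv 0$ in $\Omega$, contradicting nontriviality; hence by the strong maximum principle $u > 0$ in $B_i \cap \Omega$. The Caffarelli--Fabes--Mortola--Salsa boundary Harnack principle then implies that $u$ is comparable to a reference positive harmonic function $v$ that vanishes on $\Sigma \cap B_i$. Since $\Omega$ is quasiconvex Lipschitz one can arrange $v(x) \gtrsim \dist(x, \Sigma)$ inside a nontangential approach cone $\Gamma(x_0)$ of uniform aperture, so $u(x) \gtrsim \dist(x, \Sigma)$ on $\Gamma(x_0)$. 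This forces the nontangential limit of the inner normal derivative of $u$ at $x_0$ to be strictly positive, contradicting $x_0 \in E$.

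The principal difficulty is the last step: on a merely Lipschitz boundary the classical Hopf boundary point lemma cannot be invoked directly, so it is replaced by boundary Harnack combined with a suitable barrier. Quasiconvexity enters precisely to secure (i) a nontangential approach cone of uniform aperture at every boundary point, so that the nontangential hypothesis $\nabla u \to 0$ can be tested at $x_0$, and (ii) the Carleson estimate and doubling property of harmonic measure that underlie the Lipschitz boundary Harnack principle. Once these standard ingredients are in place, the argument above converts the structural output of the main theorem---constancy of sign on each $B_i \cap \Omega$ away from a $\mathcal{H}^{d-1}$-null set of boundary points---into the sought global boundary unique continuation statement.
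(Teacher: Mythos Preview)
The statement you are attempting is Lin's conjecture for \emph{arbitrary} Lipschitz domains, which the paper explicitly flags as open for $d\ge 3$; the paper does not prove it. What the paper does prove is the quasiconvex special case, Corollary~\ref{quasiconvexLinconjecture}. Since your argument invokes Theorem~\ref{Minskithm1}, which requires quasiconvexity, your proposal at best addresses that corollary, not the conjecture as stated.

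Comparing with the paper's proof of Corollary~\ref{quasiconvexLinconjecture}: both reductions agree up to the point where one must show, on a ball $B_i$ with $u>0$ in $B_i\cap\Omega$, that $\partial_\nu u$ cannot vanish on a subset of $B_i\cap\Sigma$ of positive surface measure. The paper closes this via Dahlberg's theorem: harmonic measure on a Lipschitz domain is an $\mathcal{A}_\infty$ weight with respect to surface measure, so its density (the Poisson kernel $\partial_\nu G$) vanishes only on a $\sigma$-null set, and by the comparison principle $\partial_\nu u\approx \partial_\nu G$ a.e.\ on $B_i\cap\Sigma$. You instead assert a pointwise Hopf-type lower bound $v(x)\gtrsim\dist(x,\Sigma)$ near a chosen $x_0$, attributed to quasiconvexity. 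This step is a genuine gap: quasiconvexity places $\Omega\cap B_r(x_0)$ \emph{inside} an approximate half-space (an exterior-type condition), which is the wrong direction for an interior barrier; it supplies neither an interior ball nor an interior cone of fixed aperture tangent at $x_0$, and indeed even convex domains (wedges) violate the linear lower bound at corner points. The a.e.\ statement you need is true, but its proof is precisely Dahlberg's $\mathcal{A}_\infty$ theorem, not a barrier construction.

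Your final paragraph also mislocates the role of quasiconvexity. Nontangential cones of uniform aperture, the Carleson estimate, doubling of harmonic measure, and the CFMS boundary Harnack principle all hold on \emph{general} Lipschitz domains; none of them requires quasiconvexity. In this paper quasiconvexity enters only in producing Theorem~\ref{Minskithm1} (through the $A$-starshape condition driving almost-monotonicity of the doubling index, and through Lemma~\ref{citeabsence}), not in the passage from sign-definiteness on $B_i$ to nonvanishing of $\partial_\nu u$.
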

 The above full conjecture remains unresolved for $d\ge 3$ in general domains. However, significant efforts have been focused on the regularity and geometric condition of $\partial\Omega$ and deriving  quantitative estimate of the singular sets on $\partial\Omega$. In \cite{L91}, Lin proved the conjecture for $C^{1,1}$ domains by showing that the singular sets have Hausdorff dimension at most $d-2$ (for additional quantitative estimates, refer to Burq-Zuily in \cite{NZ23}). This result was further expaned to $C^{1,\alpha}$ domains by Adolfsson-Escauriaza \cite{AE97}, and to $C^{1,\txt{Dini}}$ domains by Kukavica-Nystr$\dot{o}$m \cite{KN98} and Banerjee-Garofalo \cite{BG16}. Recently, Tolsa in \cite{T21} successfully proved the aforementioned conjecture for $C^1$ domains (or Lipschitz domains with small constant).  His proof leverages the new potent methodologies developed by Logunov-Malinikova in \cite{L18b,L18a,L18}.  In more recent advancements, Gallegos \cite{G22} introduced the set $\mathcal{S}_{\Sigma}'(u) =\left \{x \in \Sigma \mid u^{-1}(0) \cap B(x,r)\cap\Omega \ne \emptyset, \text{ for all }r>0   \right \},$
 which represents the region where $u$ changes sign in every neighborhood, and demonstrated that the Hausdorff dimension of $\mathcal{S}_{\Sigma}'(u) $ is strictly smaller than $d-1$. Furthermore, in the case $\Omega$ is a $C^{1,\txt{Dini}}$ domain, $\mathcal{S}_{\Sigma}'(u)$ coincides with the usual singular set at the boundary of $u:\mathcal{S}_{\Sigma}(u)=\left \{ x\in\Sigma|\norm{\nabla u(x)}=0 \right \} $ \cite[Proposition 1.9]{G22}. Along another line, Adolfsson-Escauriaza-Kenig \cite{AEK95} proved the conjecture for convex domains. Additionally, Mccurdy \cite{M19} obtained more quantitative result in arbitrary convex domains.

 Another related issue pertains to the quantitative estimation of nodal sets near the boundary. Logunov-Malinnikava-Nadirashvili-Nazarov \cite{LMNN21} obtained the sharp upper bound for Laplace eigenfunctions with Dirichlet boundary conditions in $C^1$ domains (or Lipschitz domains with small constants). Recently, Zhu-Zhuge \cite{ZZ23} have broadened this result to quasiconvex Lipschitz domains, which encompasses both $C^1$ and convex domains. Moreover, there exists a quasiconvex Lipschitz curve that is nowhere convex or $C^1$ (see \cite[Appendix A.1]{ZZ23}). Regarding this, the concept of quasiconvex domains was initially introduced by Jia-Li-Wang in \cite{JLW10} for analyzing global Calderón-Zygmund estimate.

  It is observed that Lin's conjecture is valid for both $C^1$ and convex domains, and quantitative estimates of nodal sets have been derived for $C^1$ and quasiconvex domains. Consequently, we pose the question of whether Lin's conjecture can be demonstrated in quasiconvex domains. The main contribution of this paper is to establish a quantitative estimate of singular sets, which also leads to the verification of Lin's conjecture for quasiconvex Lipschitz domains.

\subsection{Assumptions and main results} 
Let us consider the second order elliptic operator given by $\cL=-\nabla \cdot A\nabla.$ A function $u\in H^1(\Omega)$ is termed $A$-harmonic in $\Omega$ if $\cL(u)=0$ in $\Omega$. In the context of this paper, we make the standard assumption that the coefficient matrix $A$ fulfills the following conditions:
\begin{itemize}[label=\textbf{\textbullet}]
    \item[(i)] there exists $\Lambda\ge 1$ such that $\Lambda^{-1}\mathbin{I}\le A(x)\le \Lambda\mathbin{I} $ for any $x\in\Bar{\Omega}$.

    \item[(ii)] $A^T=A$.
    \item[(iii)]there exists $\gamma\ge 0$ such that $\norm{A(x)-A(y)}\le \gamma\norm{x-y}$ for every $x,y\in \Bar{\Omega}$.
\end{itemize}

Next, we introduce the assumption on the domain $\Omega\s\R^d$. 
\begin{definition}
  We say a domain $\Omega$ is Lipschitz, if there exists $r_0\ge 0$ such that for every $x_0\in \p\Omega$ the boundary patch $\p\Omega \cap B_{r_0}(x_0)$, after a rigid transformation, can be expressed as a Lipschitz graph $x_d=\phi(x')$ such that 
  \begin{equation}\label{deflipschitz}
      B_{r_0}(x_0)\cap \Omega=B_{r_0}(x_0)\cap \left \{ x=(x',x_d):x_d\ge \phi(x') \right \} 
  \end{equation}
   Moreover, the Lipschitz constants of these graphs are uniformly bounded by some contant $L$.  
\end{definition}

In this paper, we will call $\omega(\rho):(0,\infty)\to [0,\infty)$ is a quasiconvexity modulus, which is a continuous nondecreasing function such that $\lim_{\rho \to 0}\omega(\rho)=0$.

\begin{definition}\label{def1.1}
    A Lipschitz domain $\Omega$ is called quasiconvex if it satisfies the following property: there exist a quasiconvexity modulus $\omega(\rho)$ and $r_0>0$ such that for each point $x_0$ is translated to 0 and the local graph of $\p\Omega$,  $x_d=\phi(x')$, satisfies $\phi(0)=0$ and 
    \begin{equation}\label{def qua}
        \phi(x')\ge -\norm{x'}\omega(\norm{x'}),\quad \txt{for all $x'$ with } \norm{x'}<r_0.
    \end{equation}
\end{definition}

In this paper, we adopt an alternative and equivalent definition of quasiconvex domains: : for each $x_0\in \p\Omega$ and $r<r_0$, there exists a unit vector $n$(coinciding with the outerward normal at $x_0$ if exists) such that
\begin{equation}\label{quasiconvex}
    \Omega\cap B_r(x_0)\s \left \{ y:(y-x_0)\cdot n\le r\omega(r) \right \}.
\end{equation}
Specifically, when $\Omega$ is convex, we have $\omega(r)=0$. If $\Omega$ is a $C^1$ domain, then(\ref{def qua}) should be strengthened to a two-sided condition $$\norm{x'}\omega(\norm{x'})\ge \phi(x')\ge -\norm{x'}\omega(\norm{x'}).$$  Therefore, in a manner of speaking, quasiconvexity can be viewed as a one-sided $C^1$ condition.

The main result of this paper is stated as follows.

\begin{theorem}\label{Minskithm1}
   Let $\Omega \subset \mathbb{R}^d$ be a bounded quasiconvex Lipschitz domain, and let $A$ satisfy the standard assumptions. Suppose $u$ is a nontrivial solution of $\cL(u)=0$ in $\Omega$. Then there exist positive constants $r_0=r_0(\omega,\gamma,\Lambda,L,d)$ and $\epsilon_1=\epsilon_1(\omega,\gamma,\Lambda,L,d)$ such that for all $x\in\partial\Omega$, If $u$ vanishes on $\partial\Omega\cap B(x,2r_0)$, where $\partial\Omega\cap B(x, r_0)$ is denoted as $\Sigma$. Then we can find a family of open balls $B_i$, indexed by $i\in\mathbb{N}$ and centered on $\partial\Omega\cap B(x,2r_0)$,  satisfying the following properties:
   
\begin{itemize}
    \item[(i)] For every $i\in\mathbb{N}$, $|u|>0$ in $B_i\cap \Omega$,
    \item[(ii)] For every compact $K\subset \Sigma$, the set $K\setminus  \bigcup_{i}B_i$ has Minkowski dimension at most $d-1-\epsilon_1$.
\end{itemize}
\end{theorem}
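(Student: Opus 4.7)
The plan is to adapt the Logunov--Malinnikova covering strategy---employed by Tolsa \cite{T21} for $C^1$ domains and extended by Gallegos \cite{G22}---to the one-sided geometry of a quasiconvex Lipschitz boundary. The central objects are (a) a boundary doubling index (or Almgren-type frequency) $N^*(x,r)$ defined at points $x\in\Sigma$, and (b) a good/bad dichotomy that distinguishes balls on which $\norm{u}$ is bounded away from zero from balls on which $u$ changes sign.

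\textbf{Step 1 (reflection and reduction to the interior).} Near any $x_0\in\Sigma$, the quasiconvexity hypothesis \eqref{quasiconvex} yields a supporting unit vector $n$ with $\Omega\cap B_r(x_0)\s\{y:(y-x_0)\cdot n\le r\omega(r)\}$. I would take the odd reflection of $u$ across the hyperplane $\{(y-x_0)\cdot n=0\}$, producing an extension $\tilde u$ defined on the full ball $B_r(x_0)$. Since $u$ vanishes on $\Sigma$, the reflected $\tilde u$ solves a divergence-form elliptic equation whose coefficients remain Lipschitz outside a one-sided sliver of thickness $\lesssim r\omega(r)$; that sliver enters the subsequent analysis only through $\omega$-perturbation terms.

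\textbf{Step 2 (doubling index, good balls, and nonvanishing).} For the reflected function I would establish an almost-monotonicity estimate of the form $N^*(x,r_1)\le N^*(x,r_2)+C\omega(r_2)$ for $r_1<r_2$, via a Rellich/Almgren identity applied to $\tilde u$. Call $B_r(x)$ \emph{good} if $N^*(x,r)\le N_0$ for a threshold $N_0(d,\Lambda,\gamma,L,\omega)$, and \emph{bad} otherwise. The key analytic input is that on a good ball, a quantitative three-ball/Harnack argument forces $u$ to have constant sign on $B_{r/C}(x)\cap\Omega$; these shrunken balls furnish the family $(B_i)_i$ of conclusion (i).

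\textbf{Step 3 (simplex iteration and Minkowski dimension).} To bound $K\setminus\bigcup_i B_i$ I would iterate a Logunov--Malinnikova simplex/hyperplane lemma at the boundary: whenever $N^*(x,r)>N_0$ and $B_r(x)$ is partitioned into sub-balls of radius $r/C$, the number of sub-balls whose doubling index still exceeds $N_0/2$ is strictly less than $C^{d-1-\epsilon_0}$. Iterating across dyadic scales produces, for every small $\rho$, a covering of $K\setminus\bigcup_i B_i$ by at most $\rho^{-(d-1-\epsilon_1)}$ balls of radius $\rho$, which is the Minkowski bound in (ii); the exponent $\epsilon_1$ depends on $\epsilon_0$ with a slight loss that absorbs the $\omega$-perturbations of Step~2.

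The principal obstacle will be the one-sidedness of quasiconvexity. In a $C^1$ or convex domain the sliver between $\partial\Omega$ and the supporting hyperplane is either absent or symmetric, so the Rellich identity in the monotonicity formula produces only benign boundary integrals. Under mere quasiconvexity the sliver is one-sided and rough, and the integration by parts underlying frequency monotonicity generates boundary terms of order $\omega(r)$ times the bulk frequency that do not cancel and must be reabsorbed scale-by-scale without producing a vicious circle. Controlling these terms---most plausibly via a bi-Lipschitz straightening of the quasiconvex patch to a hyperplane with deformation modulus bounded by $\omega$, combined with the boundary gradient estimates of Zhu--Zhuge \cite{ZZ23}---is where the bulk of the technical work will concentrate.
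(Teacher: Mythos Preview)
Your proposal has the right high-level architecture (doubling index, drop mechanism, iterative covering), but Step~1 contains a genuine gap and Steps~2--3 misstate the mechanism actually needed.

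\textbf{The reflection in Step~1 does not produce a usable equation.} Odd reflection across the supporting hyperplane $\{(y-x_0)\cdot n=0\}$ only yields an elliptic equation for $\tilde u$ if $u$ vanishes on that hyperplane. Here $u$ vanishes on the Lipschitz graph $\partial\Omega$, not on the hyperplane; the sliver between them has thickness $r\omega(r)$ but is a region where the reflected function simply fails to satisfy any PDE, so it cannot be treated as an $\omega$-perturbation of the coefficients. The paper avoids reflection altogether: it exploits quasiconvexity through the \emph{$A$-starshape condition} $n(y)\cdot A(y)A^{-1}(x)(y-x)\ge 0$ on $\partial\Omega$ (Lemma~\ref{shift control doubling}), which holds for interior points $x$ at small scales and is exactly what the Garofalo--Lin monotonicity formula needs when the Dirichlet condition is imposed on a non-flat boundary. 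The doubling index is then taken at \emph{interior} points---centers of Whitney cuboids in a dyadic tree rooted near $\Sigma$---rather than at boundary points.

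\textbf{The counting in Step~3 is not the right dichotomy.} It is not true that among $C^{d-1}$ sub-balls fewer than $C^{d-1-\epsilon_0}$ retain index above $N_0/2$; what one actually proves (Lemma~\ref{keylemma1}) is that a fixed fraction $\delta$ of the $K$-th generation descendants have index at most $\tfrac12 N^*(R)$, while \emph{all} descendants have index at most $(1+CS^{-1})N^*(R)$. Since the index can creep upward, a direct ``few bad children'' iteration fails; the paper instead tracks a modified index $N'$ along each branch and uses a large-deviations estimate on the goodness frequency $F_j(x)$ to bound the set where $N'$ never drops below $N_0/2$. Separately, your Step~2 claim that bounded doubling plus Harnack forces constant sign on a sub-ball is precisely the nontrivial ``absence of nodal points'' lemma of Zhu--Zhuge (Lemma~\ref{citeabsence} here), which requires its own argument and should not be folded into a three-ball inequality.
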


Recall that the upper Minkowski dimension of a bounded set $E\subset \R^{d-1}$ can be defined as 
\begin{equation}\label{defMinkovski}
\text{dim}_{\overline{\mathcal{M}}}E=\limsup_{j\to \infty } \frac{\log\left(\#\left\{\text{dyadic cubes $Q$ of side length $2^{-j}$ that satisfy $Q \cap E \ne \emptyset$}\right\}\right)}{j\log 2} 
\end{equation}
The proof of Theorem \ref{Minskithm1} relies on two critical technical lemmas. The first one is Lemma \ref{keylemma1}, which is a modification of lemmas found in \cite[Lemma 3.1]{T21} and \cite[Lemma 4.1]{G22}. The second key lemma, Lemma \ref{keylemma2} by Zhu-Zhuge \cite{ZZ23}, plays a crucial role in bounding the sizes of balls centered at the boundary that do not contain zeros of \(u\). Subsequently, we employ a combination technique similar to that used in \cite{G22} to regulate the size of the zero set of $u$ in close proximity to the boundary.

As a direct consequence of the previous theorem, we obtain the following corollary:
\begin{corollary}\label{corolaryofminkovski}
Under the same assumptions with Theorem \ref{Minskithm1}, we have $$ \text{dim}_{\overline{\mathcal{M}}}(\mathcal{S}_{\Sigma}'(u)\cap K)\le d-1-\epsilon_1$$
 for any compact set $K\subset\Sigma.$
\end{corollary}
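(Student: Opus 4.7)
The corollary follows from Theorem \ref{Minskithm1} in a direct way, so my plan is essentially to verify that $\mathcal{S}_{\Sigma}'(u) \cap K$ is contained in the ``bad set'' $K \setminus \bigcup_i B_i$ produced by the theorem, and then invoke monotonicity of the upper Minkowski dimension.

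First I would apply Theorem \ref{Minskithm1} to the given $u$, which yields the countable family of open balls $(B_i)_{i \in \mathbb{N}}$ centered on $\partial \Omega \cap B(x,2r_0)$ satisfying properties (i) and (ii). The key step is the following inclusion:
\begin{equation*}
    \mathcal{S}_{\Sigma}'(u) \cap \bigcup_i B_i = \emptyset.
\end{equation*}
To see this, suppose $y \in B_i$ for some $i$. Since $B_i$ is open, there exists $r > 0$ with $B(y,r) \subset B_i$. By property (i), $|u| > 0$ on $B_i \cap \Omega$, so in particular $u^{-1}(0) \cap B(y,r) \cap \Omega = \emptyset$. By the very definition of $\mathcal{S}_{\Sigma}'(u)$, this prevents $y$ from belonging to $\mathcal{S}_{\Sigma}'(u)$.

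Consequently, for any compact $K \subset \Sigma$,
\begin{equation*}
    \mathcal{S}_{\Sigma}'(u) \cap K \subset K \setminus \bigcup_i B_i.
\end{equation*}
Since the set on the right is a closed subset of the compact set $K$, hence bounded, and since the upper Minkowski dimension is monotone under inclusion of bounded sets (as is immediate from the covering definition \eqref{defMinkovski}), property (ii) of Theorem \ref{Minskithm1} gives
\begin{equation*}
    \text{dim}_{\overline{\mathcal{M}}}\bigl(\mathcal{S}_{\Sigma}'(u) \cap K\bigr) \le \text{dim}_{\overline{\mathcal{M}}}\Bigl(K \setminus \bigcup_i B_i\Bigr) \le d - 1 - \epsilon_1,
\end{equation*}
which is the desired conclusion.

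There is no serious obstacle here; the entire substance is packaged inside Theorem \ref{Minskithm1}. The only thing worth being careful about is the openness of each $B_i$, which is what lets the nonvanishing of $u$ on $B_i \cap \Omega$ rule out membership in $\mathcal{S}_{\Sigma}'(u)$ (whose definition requires zeros in \emph{every} neighborhood inside $\Omega$).
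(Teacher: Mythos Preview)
Your argument is correct and matches the paper's approach exactly: the paper states this corollary as ``a direct consequence'' of Theorem~\ref{Minskithm1} without further proof, and the details you supply (the inclusion $\mathcal{S}_{\Sigma}'(u)\cap K\subset K\setminus\bigcup_i B_i$ via openness of the $B_i$, followed by monotonicity of upper Minkowski dimension) are precisely the intended ones.
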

\begin{remark}
    Corollary \ref{corolaryofminkovski} give Hausdorff dimension estimates for the set $\mathcal{S}_{\Sigma}'(u)$ by taking an exhaustion of $\Sigma$ by compact sets. Note that Hausdorff dimension estimates are weaker than Minkowski dimension estimates. See \cite[Chapter 5]{M95} for more about Hausdorff measure and Minkowski dimension.
\end{remark}

 Theorem \ref{Minskithm1} further enables us to establish the validity of Lin's conjecture for quasiconvex Lipschitz domains. The precise formulation of this result is as follows:
\begin{corollary}\label{quasiconvexLinconjecture}
    Under the same assumptions with Theorem \ref{Minskithm1}, the set $$\mathcal{S}_{\Sigma}(u):=\left \{x\in \Sigma\mid \norm{\nabla u(x)}=0  \right \}$$ has $d-1$-Hausdorff measure $0$. 
\end{corollary}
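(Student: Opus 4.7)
The plan is to deduce this corollary almost immediately from Theorem \ref{Minskithm1}, using a Hopf-type boundary point lemma for quasiconvex domains as the bridge. First I would apply Theorem \ref{Minskithm1} to produce the countable family $\{B_i\}$ of open balls centered on $\Sigma$ with $|u|>0$ on each $B_i\cap\Omega$, and such that $K\setminus\bigcup_i B_i$ has Minkowski dimension at most $d-1-\epsilon_1$ for every compact $K\subset\Sigma$.

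The central claim to establish next is that $\mathcal{S}_\Sigma(u)\cap B_i=\emptyset$ for every $i$. Fix $x_0\in B_i\cap\Sigma$. After shrinking $B_i$ about $x_0$ if necessary so that $B_i\cap\Omega$ is connected, $u$ has a definite sign there, say $u>0$, while $u=0$ on $B_i\cap\Sigma$. Quasiconvexity at $x_0$, via the one-sided estimate $\phi(x')\ge-\norm{x'}\omega(\norm{x'})$, places $\partial\Omega$ below a nearly flat reference hyperplane and, combined with the Lipschitz interior cone, permits a Hopf-type interior barrier at $x_0$, i.e.\ a subsolution of $\cL$ whose inward normal derivative at $x_0$ is strictly positive. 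The comparison principle applied to $u$ and this barrier then forces a non-tangential lower bound $u(y)\ge c\,\dist(y,\partial\Omega)$ near $x_0$, which is incompatible with $\norm{\nabla u(x_0)}=0$.

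Granted the claim, $\mathcal{S}_\Sigma(u)\cap K\subset K\setminus\bigcup_i B_i$ for every compact $K\subset\Sigma$, so Theorem \ref{Minskithm1}(ii) gives
\[
\dim_H(K\cap\mathcal{S}_\Sigma(u))\le\text{dim}_{\overline{\mathcal{M}}}(K\cap\mathcal{S}_\Sigma(u))\le d-1-\epsilon_1<d-1,
\]
whence $\mathcal{H}^{d-1}(K\cap\mathcal{S}_\Sigma(u))=0$. Exhausting $\Sigma$ by a countable family of compact subsets and invoking countable subadditivity of $\mathcal{H}^{d-1}$ concludes.

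The main obstacle is establishing the Hopf boundary point lemma at a merely quasiconvex boundary point, which need not satisfy any interior tangent ball condition and may fail to be $C^1$ anywhere. For $C^{1,\txt{Dini}}$ or convex domains this is classical, but here the barrier must be built purely from the modulus $\omega$, using the fact that $\Omega$ lies, up to an error of size $r\omega(r)$, in a half-space at every boundary point. Adapting the barriers constructed in the nodal-set analysis of Zhu--Zhuge \cite{ZZ23} appears to provide exactly this ingredient; verifying that the resulting normal-derivative lower bound is genuinely linear in $\dist(\cdot,\partial\Omega)$, rather than sub-linear, is where the quasiconvexity modulus must be used carefully.
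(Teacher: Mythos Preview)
Your central claim---that $\mathcal{S}_\Sigma(u)\cap B_i=\emptyset$ for every $i$---is false in this generality, and this is precisely the point where the argument breaks down. A pointwise Hopf--Oleinik lemma of the strength you invoke (linear lower bound $u(y)\ge c\,\dist(y,\partial\Omega)$ at \emph{every} boundary point) requires an interior $C^{1,\text{Dini}}$-type condition; quasiconvexity is an \emph{exterior} near-flatness condition and gives no such interior control beyond the Lipschitz cone already present. In fact the paper itself (citing Gallegos \cite[Section~9]{G22}) notes that for $C^1$ domains---which are quasiconvex---there are examples in which $\dim_H\mathcal{S}_\Sigma(u)$ is arbitrarily close to $d-1$. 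Since the exceptional set $K\setminus\bigcup_i B_i$ has dimension at most $d-1-\epsilon_1$ with $\epsilon_1$ fixed, such an $\mathcal{S}_\Sigma(u)$ cannot lie entirely in the exceptional set, and must therefore meet some $B_i$. So your claim would force $\dim_H\mathcal{S}_\Sigma(u)\le d-1-\epsilon_1$, contradicting these examples. The barriers from \cite{ZZ23} that you propose to adapt establish sign definiteness of $u$ in small balls, not linear growth from the boundary, and do not yield a Hopf lemma.

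The paper's proof is genuinely different and avoids this obstruction: it does \emph{not} assert that $\mathcal{S}_\Sigma(u)\cap B_i$ is empty, only that it has surface measure zero. On each $B_i$ where $u>0$, the boundary comparison principle makes $n\cdot A\nabla u$ comparable to $n\cdot A\nabla G$ (the conormal derivative of the Green function), which is the density of the $\cL$-elliptic measure $\mu_A$ with respect to surface measure $\sigma$. By Dahlberg \cite{Da77} and Fefferman--Kenig--Pipher \cite{FKP91}, $\mu_A\in\mathcal{A}_\infty(\sigma)$ on Lipschitz domains, so $d\mu_A/d\sigma$ vanishes only on a $\sigma$-null set; hence $\mathcal{H}^{d-1}(\{\partial_\nu u=0\}\cap B_i)=0$. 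Summing over $i$ and adding the $(d-1-\epsilon_1)$-dimensional remainder gives the corollary. The essential difference is that the paper replaces the unavailable pointwise Hopf estimate by a measure-theoretic statement coming from the $\mathcal{A}_\infty$ structure of elliptic measure.
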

Note that in \cite[Section 9]{G22}, Gallegos presented an example that demonstrates the impossibility of improving Corollary \ref{quasiconvexLinconjecture} in terms of Hausdorff dimension estimates. This stands in contrast to the case of higher regularity ($C^{1,\text{Dini}}$), where the set $\mathcal{S}_{\Sigma}(u)$ is known to have dimension $(d-2)$, as shown in \cite{KZ22}. Consequently, this observation suggests that the set $\mathcal{S}_{\Sigma}'(u)$ could serve as a natural substitute for the conventional singular set $\mathcal{S}_{\Sigma}(u)$ in the context of Lipschitz domains.

\textbf{Notations.} Throughout the paper, we denote by $C,C_1, C_0, C^*,\cdots,$ large positive constants depending at most on $d, L, A,$ and $\omega$, which may change from line to line.
The notation $A\preceq B$ is equivalent to $A\le CB$, and $A\approx B$ is equivalent to $\frac{1}{C}A\le B\le CA$. Denote by $B(E,r) = \{x: d(x,E) < r \}$ the $r$-neighborhood of the set $E$.
Occasionally, we also employ the notation $A\ll B$ to indicate that $CA\le B$.

\textbf{Organizations.} The remainder of the paper is organized as follows. In Section \ref{sectionmonotonicityofdoublingindex}, we introduce tools concerning the monotonicity of the doubling index, which will be applied consistently throughout the subsequent sections. Section \ref{Whitneydecomposition} is devoted to the construction of a Whitney cuboid structure for $\Omega$, which will be utilized in the subsequent sections. In Section \ref{sectionnozeros}, we delve into the details of Lemma \ref{keylemma2}. Finally, the main theorem is proven in Section \ref{sectionoftheorem}.

\textbf{Acknowledgements.} The author is supported by National Key R\&D Program of China, Grant No.2021YFA1000800. The author is also grateful to Prof. Liqun Zhang and Prof. Jinping Zhuge for their invaluable guidance and insightful discussions. Their expertise and mentorship have been instrumental in shaping this work.

\section{ALMOST MONOTONICITY OF DOUBLING INDEX}\label{sectionmonotonicityofdoublingindex}
 An important tool to study unique continuation of elliptic PDEs is the so-called frequency function. The idea goes back to the work of Almgren \cite{a79}, where it was introduced for the case of harmonic functions. It was generalized to solutions of second order elliptic equations by Garofalo-Lin \cite{GL86}.  In this section, we will show that for quasiconvex Lipschitz domains and variable coefficients, we have the almost monotonicity for the doubling index.

\subsection{Frequency function and three-ball inequality}
Let $\Omega$ be a quasiconvex Lipschitz domain and $A$ satisfy the standard assumptions. In this subsection, we further assume (up to an affine transformation) that $0\in \Bar{\Omega}$ and $A(0)=\mathbin{I}$. Note that under this normalization, the constants for $\Omega$ and $A$ will change. In particular, the new quasiconvexity modulus $\Tilde{\omega}(r)$ will satisfy $\Tilde{\omega}(r)\le \Lambda\omega(\Lambda^\frac12 r)$ which is harmless up to a constant. Similarly $\Tilde{\gamma}\le \Lambda\gamma$ and $\Tilde{L}\le \Lambda L.$

Let $R>0$ and $u$ be an $A$-harmonic function in $B_{R,+}=B_R(0)\cap \Omega$ and satisfy the Dirichlet boundary condition $u=0$ an $B_R(0)\cap \p\Omega.$  Define 
$$\mu(x)=\frac{x\cdot A(x)x}{\norm{x}^2}\quad  \txt{and} \quad H(r)=\int_{\p B_r(0)\cap \Omega}\mu u^2d\sigma.$$

Define $$ D(r)=\int_{B_{r,+}}A\nabla u\cdot\nabla u$$.

Define the frequency function of u centered at 0 as 
\begin{equation}
    \N_u(0,r)=\frac{rD(r)}{H(r)}.
\end{equation}

Let us begin by revisiting a monotonicity lemma originally derived for the domain's interior, as in \cite{GL86}. The proof for the boundary scenario has been detailed in \cite{ZZ23}, offering a thorough examination of the boundary conditions. This lemma forms a fundamental basis for our forthcoming analysis.
\begin{lemma}\label{Montic}
    Under the above assumptions, if in addition, 
    \begin{equation}\label{starshape}
        n(x)\cdot A(x)x\ge 0,\quad \txt{for almost every}\quad x\in B_R(0)\cap\p\Omega,
    \end{equation}
    then there exists $C=C(\Lambda,d)>0$ such that $e^{C\gamma r}\N_u(0,r)$ is nondecreasing in $(0,R). $ 
\end{lemma}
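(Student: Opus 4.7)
I would follow the classical Garofalo--Lin differential-inequality strategy, adapted to handle (i) the Lipschitz-coefficient errors of size $O(\gamma)$ and (ii) the boundary flux on $\p\Omega\cap B_r$, where the star-shape hypothesis \eqref{starshape} is the essential input. Throughout, write $\nu=x/\norm{x}$ and $\p_\nu^A u:=A\nabla u\cdot\nu$ on $\p B_r$.

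The first step is to compute $H'(r)$. Applying the divergence theorem to the flux field $W(x)=\frac{A(x)x}{\norm{x}}u^2(x)$ on $B_{r,+}$ (the flux across $\p\Omega\cap B_r$ drops out since $u=0$ there) gives $H(r)=\int_{B_{r,+}}\operatorname{div}W\,dx$, and the co-area formula yields $H'(r)=\int_{\p B_r\cap\Omega}\operatorname{div}W\,d\sigma$. Expanding $\operatorname{div}W$, using the normalization $A(0)=\mathbin{I}$ with $\norm{A(x)-\mathbin{I}}\le\gamma\norm{x}$ (so $\mu=1+O(\gamma r)$ and $\operatorname{tr}A=d+O(\gamma r)$ on $\p B_r$), and integrating by parts once more with $\cL u=0$ and $u|_{\p\Omega}=0$ to identify the spherical cross-term with $2D(r)$, one obtains
\begin{equation*}
H'(r)=\frac{d-1}{r}H(r)+2D(r)+O(\gamma)H(r).
\end{equation*}

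The second step is a Pohozaev--Rellich identity giving a matching lower bound for $D'(r)$. Multiplying $\cL u=0$ by $\beta\cdot\nabla u$ with $\beta(x)=A(x)x/\mu(x)$ (so that $\beta\cdot\nu=1$ on $\p B_r$) and integrating over $B_{r,+}$, one arrives after rearrangement at
\begin{equation*}
D'(r)=\frac{d-2}{r}D(r)+2\int_{\p B_r\cap\Omega}\mu^{-1}(\p_\nu^A u)^2\,d\sigma+\mathcal{B}(r)+R(r),
\end{equation*}
where $\mathcal{B}(r)$ is a boundary term on $\p\Omega\cap B_r$ proportional to $\int_{\p\Omega\cap B_r}(n\cdot A x)(\p_n u)^2\,d\sigma$ (using that $u\equiv 0$ on $\p\Omega\cap B_R$ forces $\nabla u=(\p_n u)n$ a.e.\ on this set), and $\norm{R(r)}\le C\gamma D(r)$ collects the Lipschitz errors coming from derivatives of $A$ and $\mu$. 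By the star-shape hypothesis \eqref{starshape}, $\mathcal{B}(r)\ge 0$, so
\begin{equation*}
D'(r)\ge\frac{d-2}{r}D(r)+2\int_{\p B_r\cap\Omega}\mu^{-1}(\p_\nu^A u)^2\,d\sigma-C\gamma D(r).
\end{equation*}
Combining these with the Cauchy--Schwarz inequality $D(r)^2\le H(r)\int_{\p B_r\cap\Omega}\mu^{-1}(\p_\nu^A u)^2\,d\sigma$ (which follows from $D(r)=\int_{\p B_r\cap\Omega}u\,\p_\nu^A u\,d\sigma$, itself a consequence of integration by parts using $\cL u=0$ and $u|_{\p\Omega}=0$), one computes $(\log\N_u)'(r)=1/r+D'(r)/D(r)-H'(r)/H(r)$: the $1/r$, $(d-1)/r$, and $(d-2)/r$ contributions cancel and Cauchy--Schwarz absorbs the $2D/H$ term from $H'/H$ into the Rellich term from $D'$, leaving $(\log\N_u)'(r)\ge -C\gamma$. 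Integrating yields the monotonicity of $e^{C\gamma r}\N_u(0,r)$.

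I expect the principal obstacle to be the boundary step of the Pohozaev identity, since $\p\Omega$ is only Lipschitz and quasiconvexity is genuinely one-sided. One cannot simply integrate by parts on $\p\Omega\cap B_r$ as in the smooth case; instead one would approximate $\Omega\cap B_R$ by smoother subdomains that preserve an inequality of the form \eqref{starshape} with the same sign and a uniform modulus, run the identity there, and pass to the limit via $H^1$ trace theory together with the a.e.\ parallelism of $\nabla u$ with $n$ on $\{u=0\}$. Ensuring that the sign of the critical boundary integral is preserved throughout the approximation is the most delicate part of the argument.
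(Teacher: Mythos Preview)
The paper does not give its own proof of this lemma: it simply quotes it from Garofalo--Lin \cite{GL86} for the interior case and from Zhu--Zhuge \cite{ZZ23} for the boundary adaptation. Your outline is precisely the argument carried out in those references---the Garofalo--Lin differential inequality for $H$, the Pohozaev--Rellich identity for $D$, with the $\p\Omega\cap B_r$ boundary term reducing (via $\nabla u=(\p_n u)n$ on $\{u=0\}$) to a quantity whose sign is governed by $n\cdot A x$, then Cauchy--Schwarz on $D(r)=\int_{\p B_r\cap\Omega} u\,\p_\nu^A u$---so your proposal matches the cited proofs. Your final paragraph on approximating the Lipschitz boundary to justify the integrations by parts is also the right concern, and is handled in \cite{ZZ23}; there is nothing to compare against in the present paper.
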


We say $B_{R,+}$ is $A$-starshaped with respect to 0 if (\ref{starshape}) holds. This additional geometric assumption on $\p\Omega$ is necessary in the proof of Lemma \ref{Montic} .

The relationship between $H(r)$ and the frequency function can be easily seen from  \cite[(A.7)]{ZZ23}
\begin{equation}\label{H'}
    \norm{\frac{H'(r)}{H(r)}-\frac{d-1}{r}-\frac{2}{r}\N_u(0,r)}\le C\gamma .
\end{equation}

Moreover, by using (\ref{H'}) and Lemma \ref{Montic}, we can obtain that for any $0<r_1<r_2<r_3<R$, the following three-ball inequality holds:
\begin{equation}\label{3sphere}
    \log\frac{\int_{B_{r_2,+}}\mu u^2}{\int_{B_{r_1,+}}\mu u^2}\le \beta \log\frac{\int_{B_{r_3,+}}\mu u^2}{\int_{B_{r_2,+}}\mu u^2}+d\log\frac{r_2^{1+\beta}}{r_3^\beta r_1}+C\gamma r_3,
\end{equation}
Here, $\beta=e^{C\gamma r_3}\log\frac{\frac{r_2}{r_1}}{\frac{r_3}{r_2}}$. This inequality is commonly known as the three-ball inequality.
\subsection{Non-identity matrix}
The focus of this subsection is to eliminate the requirement of $A(0)=\mathbin{I}$ in order to define the doubling index at any given point. Similar to the works of \cite{G22} and \cite{ZZ23}, we make the assumption that $A(0)\ne \mathbin{I}$.
Since $A(0)$ is symmetric and positive definite, we can write $A(0)=\mathcal{O}D\mathcal{O}^T,$ where $\mathcal{O}$ is an (constant) orthogonal matrix and D is a (constant) diagonal matrix with entries contained in $[\Lambda^{-1},\Lambda].$ Let $E=\mathcal{O}D^\frac12\mathcal{O}^T.$ Thus $A(0)=E^2$ and $E$ is also symmetric. Let $u$ be A-harmonic in $B_R(0)\cap \Omega.$ Then we can normalize the matrix $A$ such that $A(0)= \mathbin{I}$ by a change of variable $x\to Ex.$ Precisely, let $$\Tilde{u}(x) =u(Ex)\quad \txt{and} \quad \Tilde{A}(x)=E^{-1}A(Ex)E^{-1}.$$ Then $\Tilde{u}$ is $\Tilde{A}$-harmonic in $E^{-1}(B_R\cap\Omega)=E^{-1}(B_R)\cap E^{-1}(\Omega).$ Obviously now $\Tilde{A}(0)=\mathbin{I}.$ Note that the A-starshape condition $\Tilde{n}(x)\cdot \Tilde{A}(x)x\ge 0$ on $E^{-1}(B_R\cap\p \Omega)$ is equivalent to 
\begin{equation}\label{A-star}
    n(x)\cdot A(x) A^{-1}(0)\ge 0,\quad \txt{on} \quad B_R\cap\p\Omega.
\end{equation}
This condition is consistent with (\ref{starshape}) if $A(0)= \mathbin{I}$ and invariant under linear transformations. 

Let $\Tilde{\Omega}=E^{-1}(\Omega)$. We would like to find the right form of the doubling index under a change of variables. By definition of the weight $\Tilde{\mu}$ corresponding to $\Tilde{A}$ and the change of variable $y=Ex$, 
\begin{equation}\begin{aligned}
    \int_{ B_r(0)\cap \Tilde{\Omega}}\Tilde{\mu}\Tilde{u}^2=&\int_{ B_r(0)\cap \Tilde{\Omega}}\frac{x\cdot E^{-1} A(Ex)E^{-1}x}{\norm{x}^2}(u(Ex))^2dx\\
    &=\norm{\mathrm{det} A(0)}^{\frac 12}\int_{ A^{\frac 12}(B_r(0))\cap \Omega}\frac{y\cdot A(0)^{-1}A(y)A(0)^{-1}y}{y\cdot A(0)^{-1}y}u(y)^2 dy,
\end{aligned}  
\end{equation}

Therefore, taking translations into consideration, for any $x_0\in \Bar{\Omega}$, we define 

\begin{equation}\label{defJ1}
\begin{aligned}
\mu (x_0,y) &= \frac{(y-x_0)\cdot A(x_0)^{-1}A(y)A(x_0)^{-1}(y-x_0)}{(y-x_0)\cdot A(x_0)^{-1}(y-x_0)}, \\
F(x_0,r) &= x_0+A^{\frac{1}{2}}(x_0)(B_r(0)),
\end{aligned}
\end{equation}

and 

\begin{equation}\label{defJ2}
    J_u(x_0,r)=\norm{\mathrm{det} A(x_0)}^{-\frac 12}\int_{ F(x_0,r)\cap \Omega}\mu(x_0,y)u(y)^2dy.
\end{equation}

Define the doubling index by $$ N_u(x_0,r)=\log \frac{J_u(x_0,2r)}{J_u(x_0,r)}.$$
Note that $\Lambda^d\ge \mathrm{det} A(x_0)\ge \Lambda^{-d}$ for any $x_0$.  We point out that $J_u(x_0,r)$, invariant under affine transformations, is the right form of the weighted $L^2$ norm for defining doubling index.

\subsection{Auxiliar lemmas  of doubling index}
In this subsection, we consider a Lipschitz domain $\Omega$ with $0\in \Omega$. Let $u$ be an $A$-harmonic function in $B_{1}\cap \Omega$ satisfying the boundary condition $u=0$ on $B_{1}\cap \partial\Omega$. Furthermore, we only require the domain \(\Omega\) to be Lipschitz, without the additional condition of being quasiconvex. The lemmas we present here are similar to the lemmas by Zhu and Zhuge in their work \cite{ZZ23}, but with a different focus. While Zhu and Zhuge primarily study the behavior of the doubling index near quasiconvex boundary points, our lemmas concentrate on the behavior of the doubling index at interior points.

Next, we demonstrate the property of almost monotonicity for the doubling index.
\begin{lemma}\label{Keymdoub}
    Under the above assumptions, let $ x_0\in \Omega\cap B_{\frac{1}{2}}$ and $R$ be such that $0<R\le \frac{1}{16\Lambda}$. If,  in addition $B(x_0,8\Lambda R)\cap \Omega$  is A-starshaped with respect to its center $x_0$, then there exists $C=C(\Lambda, d)$ such that for any $0\le r\le R$, the following inequality holds:
    \begin{equation}\label{keymdoub}
    N_u(x_0,r)\le (1+C\gamma r)N_u(x_0,2r)+C\gamma r.
    \end{equation}
\end{lemma}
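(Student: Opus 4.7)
The plan is to reduce to the normalized setup introduced earlier in this section by a linear change of variables centered at $x_0$, and then quote the three-ball inequality (\ref{3sphere}) on three equally-spaced doubling radii. The almost monotonicity of $\N_u$ supplied by Lemma \ref{Montic} is packaged inside (\ref{3sphere}), which makes the doubling-index version an almost direct corollary once the change of variables is handled.

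First I would apply the affine map $y=x_0+A^{1/2}(x_0)z$ described just before (\ref{defJ1})--(\ref{defJ2}). In the new coordinates $\tilde u(z):=u(x_0+A^{1/2}(x_0)z)$ solves $-\nabla\cdot(\tilde A\nabla\tilde u)=0$ in $\tilde\Omega:=A^{-1/2}(x_0)(\Omega-x_0)$ with $\tilde A(0)=I$, $\tilde u$ vanishes on $\partial\tilde\Omega$ near $0$, and the Lipschitz constant of $\tilde A$ is $\tilde\gamma\le C(\Lambda)\gamma$. The ellipsoid $F(x_0,r)$ becomes the Euclidean ball $B_r(0)$, and $J_u(x_0,r)$ becomes, up to the prefactor $\det A(x_0)^{-1/2}$ that cancels in ratios, the quantity $\int_{B_r(0)\cap\tilde\Omega}\tilde\mu\,\tilde u^2$ from the preceding subsection. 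The $A$-starshape hypothesis on $B(x_0,8\Lambda R)\cap\Omega$ translates via (\ref{A-star}) into condition (\ref{starshape}) for $\tilde A$ on $B_\rho(0)\cap\partial\tilde\Omega$ for every $\rho\le 4R$, using the inclusion $F(x_0,4r)\subset B(x_0,4\Lambda^{1/2}r)\subset B(x_0,8\Lambda R)$, valid since $\Lambda\ge 1$.

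With this setup, I would apply the three-ball inequality (\ref{3sphere}) at the origin in the normalized coordinates with $r_1=r$, $r_2=2r$, $r_3=4r$. For equally-spaced radii the interpolation exponent simplifies to $\beta=e^{C\gamma(4r)}\le 1+C'\gamma r$, while the arithmetic correction
\[
d\log\frac{r_2^{1+\beta}}{r_3^\beta r_1}=d(1-\beta)\log 2
\]
is nonpositive and of size $O(\gamma r)$, so it may be discarded without changing the direction of the inequality. After noting that $\log(J_u(x_0,4r)/J_u(x_0,2r))\ge 0$ since $J_u(x_0,\cdot)$ is nondecreasing, (\ref{3sphere}) reduces to exactly
\[
\log\frac{J_u(x_0,2r)}{J_u(x_0,r)}\le (1+C\gamma r)\log\frac{J_u(x_0,4r)}{J_u(x_0,2r)}+C\gamma r,
\]
which is (\ref{keymdoub}).

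The step I expect to be most delicate is the geometric bookkeeping in the reduction: the change of variables distorts Euclidean balls into ellipsoids with eccentricity bounded by $\Lambda^{1/2}$, so one must verify that the starshape hypothesis on $B(x_0,8\Lambda R)\cap\Omega$ is strong enough to imply (\ref{starshape}) on $B_{4R}(0)\cap\partial\tilde\Omega$, which is the range required for Lemma \ref{Montic}, and hence for (\ref{3sphere}), to be applicable at $r_3=4r$. The factor $8\Lambda$ in the hypothesis is exactly what is needed to absorb the worst-case distortion; once this bookkeeping is confirmed, the lemma drops out of (\ref{3sphere}) immediately. An alternative route, if one prefers to avoid (\ref{3sphere}), is to integrate (\ref{H'}) twice to derive the quasi-identity $sH(s)/J(s)=d+2\N_u(x_0,s)+O(\gamma s)$, then compare $\int_r^{2r}\N_u(x_0,s)/s\,ds$ with $\int_{2r}^{4r}\N_u(x_0,s)/s\,ds$ via the monotonicity of $e^{C\gamma s}\N_u(x_0,s)$.
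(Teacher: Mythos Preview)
Your proposal is correct and follows essentially the same route as the paper: apply the three-ball inequality (\ref{3sphere}) with $r_1=r$, $r_2=2r$, $r_3=4r$, observe that $\beta=e^{C\gamma r_3}$ satisfies $|\beta-1|\le C\gamma r$, and that the correction term $d\log\frac{r_2^{1+\beta}}{r_1 r_3^\beta}+C\gamma r_3$ is bounded by $C\gamma r$. The paper's proof is terser because the change of variables you spell out is already packaged into the definition of $J_u$ in (\ref{defJ1})--(\ref{defJ2}); your explicit verification that the $8\Lambda R$ factor in the starshape hypothesis covers the ellipsoid distortion is a welcome clarification of a point the paper takes for granted.
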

\begin{proof}
    By the three-ball inequality in (\ref{3sphere}), if $0< r_1\le r_2\le r_3\le 8R$, we have
    $$ \log\frac{J_u(x_0,r_2)}{J_u(x_0,r_1)}\le \beta \log\frac{J_u(x_0,r_3)}{J_u(x_0,r_2)}+d\log\frac{r_2^{1+\beta}}{r_1 r_3^\beta}+C\gamma r_3.$$
    Now, we would like to obtain a monotonicity formular of the doubling index centered at $x_0.$ Let $$ r_1 = r,\quad  r_2 = 2r,\quad r_3 = 4r.$$
    It follows that $\norm{\beta -1}\le C\gamma r$ and $$ d\log\frac{r_2^{1+\beta}}{r_1 r_3^\beta}+C\gamma r_3\le C\gamma r.$$ 
    Consequently, we obtain $$ N_u(x_0,r)\le (1+C\gamma r)N_u(x_0,2r)+C\gamma r.$$
\end{proof}
In the following lemma, we provide a quantitative analysis of the propagation of the doubling index when we shift the center from one point to a nearby point. This result sheds light on the behavior of the doubling index as the center of consideration is moved within close proximity.

\begin{lemma}\label{sshiftcenter}
Let $x_1, x_0\in \Omega\cap B_{\frac{1}{2}}, 0<R\le \frac{1}{16\Lambda}$, and $B(x_0,8\Lambda R)\cap  \Omega$ is A-starshaped with respect to its center $x_0$, then there exists $C=C(\Lambda, d), C^*=C^*(\Lambda, d)$ (sufficiently large) such that if $\theta=\norm{x_1-x_0}\le \frac{R}{C^*}$, we can obtain 
    \begin{equation}\label{shiftcenter}
        N_u(x_1, R)\le (1+C(\gamma R+\frac{\theta}{R}))N_u(x_0,2R)+C(\gamma R +\frac{\theta}{R}).
    \end{equation}
\end{lemma}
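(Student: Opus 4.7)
The strategy is to transport the quantity $N_u(x_1, R)$ to the center $x_0$, where the $A$-starshape hypothesis makes the monotonicity machinery (Lemma~\ref{Montic}) and the three-ball inequality~(\ref{3sphere}) available. Since no $A$-starshape is assumed at $x_1$, Lemma~\ref{Keymdoub} does not apply there, and the entire argument must be carried out through integrals centered at $x_0$. The two main tasks are (i) to compare $J_u(x_1, r)$ with $J_u(x_0, r \pm C\theta)$, and (ii) to control the resulting ratio by $N_u(x_0, 2R)$ using the three-ball inequality at $x_0$.

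\textbf{Geometric and weight comparison.} For (i), I would first show that for some $C = C(\Lambda)$,
\begin{equation}
F(x_0, r - C\theta) \s F(x_1, r) \s F(x_0, r + C\theta),
\end{equation}
a direct consequence of the Lipschitz continuity of $A^{1/2}$ (inherited from that of $A$ and the bound $\Lambda^{-1}I \le A \le \Lambda I$). The hypothesis $\theta \le R/C^*$ with $C^* \ge C$ ensures $r - C\theta > 0$ for $r = R$. I would then compare the weights: since $\mu(x, y)$ is a ratio of quadratic forms in $y - x$ with Lipschitz coefficients in $x$, a pointwise estimate
$$|\mu(x_1, y) - \mu(x_0, y)| \le C\gamma\theta + C\theta/\norm{y - x_0}$$
holds, and an analogous bound applies to $|\det A(x_i)|^{-1/2}$. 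The singular factor $\theta/\norm{y - x_0}$ is integrable, and the inner region $\norm{y - x_0} \lesssim \theta$ contributes only $O(\theta/R)$ relative to $J_u(x_0, r)$ thanks to the boundedness $\mu \in [\Lambda^{-1}, \Lambda]$ and the fact that its volume is $\lesssim \theta^d$. Integrating, I obtain
\begin{equation}
J_u(x_1, R) \ge (1 - C(\gamma R + \theta/R))\, J_u(x_0, R - C\theta),
\end{equation}
\begin{equation}
J_u(x_1, 2R) \le (1 + C(\gamma R + \theta/R))\, J_u(x_0, 2R + C\theta).
\end{equation}

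\textbf{Applying the three-ball inequality.} For (ii), I would apply~(\ref{3sphere}) at $x_0$ with
$$r_1 = R - C\theta, \quad r_2 = 2R + C\theta, \quad r_3 = 4R,$$
permissible because $B(x_0, 8\Lambda R) \cap \Omega$ is $A$-starshaped at $x_0$ and $r_3 < 8\Lambda R$. Expanding $\log(r_2/r_1)$ and $\log(r_3/r_2)$ to first order in $\theta/R$ gives $\beta = e^{C\gamma r_3}\log(r_2/r_1)/\log(r_3/r_2) = 1 + O(\gamma R + \theta/R)$, and the additive correction $d \log(r_2^{1+\beta}/(r_1 r_3^\beta)) + C\gamma r_3$ is $O(\gamma R + \theta/R)$. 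Since $J_u(x_0, \cdot)$ is nondecreasing (so $N_u(x_0, 2R) \ge 0$) and $2R + C\theta \ge 2R$, one has $\log(J_u(x_0, 4R)/J_u(x_0, 2R + C\theta)) \le N_u(x_0, 2R)$. Combining these ingredients with the $J_u$-comparison from the first step yields
$$N_u(x_1, R) \le \log \frac{J_u(x_0, 2R + C\theta)}{J_u(x_0, R - C\theta)} + C(\gamma R + \theta/R) \le (1 + C(\gamma R + \theta/R)) N_u(x_0, 2R) + C(\gamma R + \theta/R),$$
which is the desired estimate.

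\textbf{Main obstacle.} The most delicate step is the weight comparison, specifically handling $y$ near $x_0$: although $\mu(x_1, y) - \mu(x_0, y)$ can be of order $\theta/\norm{y - x_0}$ due to the direction-dependence of $\mu$, one must check that this singular factor, once integrated against $u^2$ and combined with the analogous defect for the determinant and the annular region $F(x_0, R) \setminus F(x_0, R - C\theta)$, yields a total relative error of only $O(\gamma R + \theta/R)$ on $J_u$. The remainder is a bookkeeping exercise around the three-ball inequality and the expansion $\beta - 1 = O(\gamma R + \theta/R)$.
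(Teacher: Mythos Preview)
Your overall architecture matches the paper exactly: compare $J_u(x_1,\cdot)$ with $J_u(x_0,\cdot\pm C\theta)$ via ellipsoid inclusions and weight comparison, then feed the result into the three-ball inequality at $x_0$ with $r_1=R-C\theta$, $r_2=2R+C\theta$, $r_3=4R$. The ellipsoid and determinant steps, and the endgame with $\beta-1=O(\gamma R+\theta/R)$, are all as in the paper.

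The one real gap is your weight comparison. You write $|\mu(x_1,y)-\mu(x_0,y)|\le C\gamma\theta+C\theta/|y-x_0|$ and then try to absorb the singular piece by a volume argument. That is not enough: to obtain $J_u(x_1,R)\ge(1-C(\gamma R+\theta/R))J_u(x_0,R-C\theta)$ you would need
\[
\int_{F(x_0,R)}\frac{\theta}{|y-x_0|}\,u(y)^2\,dy\;\le\;\frac{C\theta}{R}\,J_u(x_0,R),
\]
and ``the inner region has volume $\lesssim\theta^d$'' does not give this; it bounds the measure of a set, not the weighted $L^2$ mass of $u$ on it. If you only get a constant-factor comparison $J_u(x_1,R)\ge c\,J_u(x_0,R-C\theta)$, the final inequality acquires an additive $\log(1/c)$ rather than $C(\gamma R+\theta/R)$, which is strictly weaker than the lemma. (The singular integral \emph{can} be controlled uniformly via the lower bound $H'/H\ge(d-1)/r-C\gamma$ and an integration by parts, but that is a separate argument you do not give.)

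The paper avoids this detour entirely by observing that the singular term is illusory. Writing
\[
\mu(x_0,y)=1+\frac{(y-x_0)\cdot A(x_0)^{-1}\bigl(A(y)-A(x_0)\bigr)A(x_0)^{-1}(y-x_0)}{(y-x_0)\cdot A(x_0)^{-1}(y-x_0)},
\]
the numerator carries the factor $A(y)-A(x_0)=O(\gamma|y-x_0|)$, so $\mu(x_0,y)-1=O(\gamma|y-x_0|)$ and a direct differentiation gives $|\nabla_{x_0}\mu(x_0,y)|\le C\gamma$ \emph{uniformly in $y$}. Hence $|\mu(x_1,y)-\mu(x_0,y)|\le C\gamma\theta$, the multiplicative error on $J_u$ is $1+O(\gamma\theta)\le 1+O(\gamma R)$, and no singular term ever appears. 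Replace your weight estimate with this observation and the rest of your outline goes through verbatim.
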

\begin{proof}
    The proof follows from the Lispchitz continuity of $\mu(x_0, y)$ and $A^{\frac{1}{2}}(x_0)$ and the three-ball inequality in $x_0$. Note that $$\mu (x_0,y) = 1+ \frac{(y-x_0)\cdot A(x_0)^{-1}(A(y)-A(x_0))A(x_0)^{-1}(y-x_0)}{(y-x_0)\cdot A(x_0)^{-1}(y-x_0)}. $$ 
    Then fixing $y$, one can show that $\norm{\nabla\mu(x_0,y)}\le C\gamma.$ Thus, $$ \norm{\mu(x_0,y)-\mu(x_1,y)}\le C\gamma\theta \mu(x_1,y),$$ which yields 
    \begin{equation}\label{use1}
        \mu(x_1,y)\le (1+C\gamma \theta)\mu(x_0,y).
    \end{equation}
    
On the other hand, we see that $\norm{\nabla A^{\frac{1}{2}}}\le C\norm{\nabla A}\le C\gamma,$ where $C$ depends only on $d$ and $\Lambda$.
Thus we have
\begin{equation*}
    \begin{aligned}
        &F(x_1,2R)\subset F(x_0, 2R(1+C_0\gamma \theta)+ C_1\theta)\\
        &F(x_1,R)\supset F(x_0, R(1-C_0\gamma \theta)- C_1\theta).
    \end{aligned}
\end{equation*}

When we choose $C^*\ge 4C_1+8C_0+\Lambda$, then we obtain that
\begin{equation}\label{use2}
    F(x_1, 2R)\subset F(x_0, 2R+C^*\theta)\quad\text{and}\quad F(x_1, R)\supset F(x_0, R-C^*\theta).
\end{equation}
Moreover, $$ \norm{\txt{det} A(x_0)^{\frac{1}{2}}-\txt{det} A(x_1)^{\frac{1}{2}}}\le C\gamma \theta \le C\gamma \theta \txt{det} A(x_1)^{\frac 12}.$$
This implies 
\begin{equation}\label{use3}
  \txt{det}A(x_1)^{\frac 12}\le (1+C\gamma \theta)\txt{det}A(x_0)^{\frac 12}.
\end{equation}

It follows from (\ref{use1}), (\ref{use2}) and (\ref{use3}) that 
\begin{equation*}\begin{aligned}
    J_u(x_1, 2R)&=\norm{\txt{det} A(x_1)}^{-\frac 12}\int_{F(x_1, 2R)\cap\Omega }\mu (x_1, y)u(y)^{2}dy\\
    &\le (1+C\gamma R)^2 \norm{\txt{det} A(x_0)}^{-\frac 12}\int_{F(x_0, 2R+C^*\theta)\cap\Omega }\mu (x_0, y)u(y)^{2}dy\\
    &\le (1+C\gamma R) J_u(x_0, 2R+C^*\theta)
\end{aligned}
\end{equation*}
In the same way, we can obtain $$ J_u(x_1, R)\ge (1-C\gamma R) J_u(x_0, R-C^*\theta).$$  
Furthermore, by the three-ball inequality (\ref{3sphere}), let  $$r_1 = R-C^*\theta,\quad  r_2 = 2R+C^*\theta,\quad r_3 = 4R.$$  It follows that $\norm{\beta-1}\le C(\gamma R+\frac{\theta}{R})$ and $$ d\log\frac{r_2^{1+\beta}}{r_1 r_3^\beta}+C\gamma r_3\le C(\gamma R+\frac{\theta}{R}).$$
Hence, we obtain that 
\begin{equation*}\begin{aligned}
   N_u(x_1,R)=\log \frac{J_u(x_1, 2R)}{J_u(x_1, R)}&\le \log\frac{J_u(x_0, 2R+C^*\theta )}{J_u(x_0, R-C^*\theta)}+\log\frac{1+C\gamma R}{1-C\gamma R}\\
   &\le (1+C(\gamma R+\frac{\theta}{R}))\log\frac{J_u(x_0, 4R)}{J_u(x_0, 2R+C^*\theta)}+C(\gamma R +\frac{\theta}{R})\\
   &\le (1+C(\gamma R+\frac{\theta}{R}))N_u(x_0,2R)+C(\gamma R +\frac{\theta}{R}).
\end{aligned}    
\end{equation*}
\end{proof}

\section{DROP OF DOUBLING INDEX}\label{Whitneydecomposition}
The main result of this section is Lemma \ref{keylemma1}, which provides crucial insights into the behavior of 
doubling index near the boundary. In order to establish this result, we present a proof that bears resemblance to the approaches found in \cite{ZZ23, T21, G22, LMNN21}. 

\subsection{Whitney cuboid structure on $\Omega$}
Let $\Omega$ be a quasiconvex Lipschitz domain. Let $L\ge 0$ be the Lipschitz constant associated to the Lipschitz domain $\Omega.$ Define a standard rectangle cuboid by $Q_0=[-\frac{1}{2},\frac{1}{2})^{d-1}\times\left \{ (1+L)[-\frac{1}{2},\frac{1}{2})  \right \}.$ Throughout this paper, all the cuboids under consideration are obtained by applying affine transformations (combinations of linear transformations and translations) to $Q_0$. Let \(\pi(Q)\) represent the vertical projection of \(Q\) onto \(\mathbb{R}^{d-1}\). Consequently, \(\pi(Q)\) corresponds to a standard cube in \(\mathbb{R}^{d-1}\). For instance, \(\pi(Q_0) = [-\frac{1}{2},\frac{1}{2})^{d-1}\). Let the side lengths of \(Q_0\) and \(\pi(Q_0)\) be denoted by \(\ell(Q_0) = \ell(\pi(Q_0)) = 1\). Note that when \(L = 0\), the cuboid reduces to a cube.

Let $B$ be a ball centered in $0\in \p\Omega$ with radius $2r_0$ small enough whose boundary patch is given by $x_d=\phi(x')$ and $\norm{\nabla \phi}\le L.$  Suppose $u$ is an $A$-harmonic function in $B\cap \Omega$ with $u=0$ on $B\cap \p\Omega$, where we also let $\Sigma=\frac{1}{2}B\cap \p\Omega.$ Next, we consider the Whitney decomposition of $\Omega$ as described in \cite{T21,G22} in the coordinate framework centered at $0$.

This decomposition involves a family $\mathcal{W}$ satisfying the following properties for every $\mathrm{Q}\in \mathcal{W}$:

\begin{itemize}
    \item [(i)] $10\mathrm{Q}\subset \Omega$,
    \item[(ii)] $W\mathrm{Q}\cap \p\Omega\ne \emptyset$,
    \item[(iii)] there are at most $D_0$ cuboids $\mathrm{Q}'\in\mathcal{W}$ such that $10\Q\cap 10\Q'\ne \emptyset.$ Furthermore, for such cuboids $\Q'$, we have $\frac 12 \ell(\Q')\le \ell(\Q)\le 2\ell(\Q')$.
\end{itemize}

Above, we denote by $\ell(\Q)$ the side length of $\Q$ and by $x_Q$ the center of the cuboid $Q$. From the properties (i) and (ii) it is clear that $\dist(\Q,\p\Omega)\approx \ell (\Q).$ The construction of a Whitney decomposition satisfying the aforementioned properties follows standard arguments, wherein cuboids are employed instead of the cubes used in \cite[Lemma B.1]{T21}.

Let $H_0$ be the horizontal hyperplane through the origin and $\Pi$ denote the orthogonal projection on $H_0$. Consider $B_0$ a ball centered on $\Sigma$ such that $M_0^2B_0\cap\Omega\subset B\cap\Omega$ for some $M_0$ sufficiently large will be fixed below.

Now we will introduce a tree structure of parents, children and generations to this Whitney cuboid decomposition.  
 We choose $R_0\in\mathcal{W}$ such that $R_0\subset \frac{M_0}{2} B_0.$ It will be the root of the tree and we define $\mathcal{D}_{\mathcal{W}} ^{0}(R_0).$ 
 For $k\ge 1,$ we define first $$ J(R_0)=\left \{ \Pi(Q): \Q\in \W \text{ such that }\Pi(Q)\subset \Pi(R_0)\text{ and $\Q$ is below $R_0$}  \right \}. $$
 We have that $J(R_0)$ is a family of $d-1$ dimensional dyadic cubes in $H_0,$ all of them contained in $\Pi(R_0)$. Let $J_k(R_0)\subset J(R_0)$ be the family of $(d-1)-$dimensional dyadic cubes in $H_0$ with side length equal to $2^{-k}\ell(R_0).$ To each $\Q'\in J_k(R_0)$ we assign some $\Q\in\W$ such that $\Pi(\Q)=\Q'$ and such that $\Q$ is below $R_0$ (notice that there may be more than one choice for $\Q$ but the choice is irrelevant), see \cite[Lemma B.2]{T21}, and we write $s(\Q')=\Q.$ Then we define $$ \mathcal{D}_{\W}^{k}(R_0):=\left \{ s(\Q'):\Q'\in J_k(R_0) \right \} $$ and $$ \mathcal{D}_{\W}(R_0)=\bigcup_{k\ge0}\mathcal{D}_{\W}^{k}(R_0).$$ Notice that, for each $k\ge 0$, the family $\left \{ \Pi(\Q): \Q \in \mathcal{D}_{\W}^{k}(R_0) \right \} $ is a partition of $\Pi(R_0)$. Finally, for each $R\in \mathcal{D}_{\W}^{k}(R_0)$ and $j\ge 1,$ we denote $$ \mathcal{D}_{\W}^{j}(R)=\left \{\Q\in \mathcal{D}_{\W}^{k+j}(R_0): \Pi(Q)\subset\Pi(R)  \right \}. $$ By the properties of Whitney cuboids, we can observe that $$ \Q\in \mathcal{D}_{\W}(R_0)\Rightarrow 
 \txt{dist}(Q,\p\Omega)=\txt{dist}(\Q, B\cap\p\Omega)\approx \ell(\Q).$$ Further, for any $\Q\in\W,$ we define its associated cylinder by $$ \mathcal{C}(\Q):=\Pi^{-1}(\Pi(\Q))$$ and the $(d-1)$-dimensional Lebesgue measure on the hyperplane $H_0$ by $m_{d-1}.$ 
 \subsection{Lemma on the behavior of doubling index in the Whitney tree}
Before proving the first key lemma that probabilistically controls the behavior of the doubling index in the tree of Whitney cuboids defined in the last section, we provide some basic tools for the proof.

In establishing the monotonicity of the doubling index, we require the \(A\)-starshape condition. For locally flat \(C^1\) domains, as discussed by Gallegos \cite{G22} and Tolsa \cite{T21}, we ensure that interior points satisfy the \(A\)-starshape condition by constraining the slope to be sufficiently small. Inspired by the work of Zhu-Zhuge \cite{ZZ23}, we achieve satisfaction of the \(A\)-starshape condition for interior points by considering small regions within quasiconvex domains.

\begin{lemma}\label{shift control doubling}
Let $M_0\ge2S>0,T>0$ be fixed sufficiently large positive constants, and let $R\in \mathcal{D}_{\W}(R_0)$ satisfy
\begin{equation}\label{smallcon}
S^2\ell(R)+(\sqrt{1+L^2}T+S)\omega((\sqrt{1+L^2}T+S)\ell(R))
\le \frac{1}{\gamma\Lambda(1+L^2)T}.
\end{equation}
 Then for any $x\in \frac{M_0}{2}B_0\cap \Omega$ satisfying
 $$ dist(x,\p\Omega)\le T\ell(R)\quad \txt{and}\quad dist(x,\p\Omega)\ge T^{-1}\ell(R),$$
  we have $B(x, S\ell(R))\cap  \Omega$ is A-starshaped with respect to $x$.    
\end{lemma}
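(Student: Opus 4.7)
The plan is to verify the $A$-starshape condition $n(y)\cdot A(y)A^{-1}(x)(y-x)\ge 0$ directly, for a.e.\ boundary point $y\in B(x,S\ell(R))\cap\partial\Omega$. I split the expression as $n(y)\cdot(y-x)+n(y)\cdot[A(y)A^{-1}(x)-I](y-x)$. The matrix correction is a lower-order perturbation: by the Lipschitz continuity of $A$ together with $|A^{-1}(x)|\le\Lambda$, its absolute value is controlled by $\Lambda\gamma|y-x|^2\le\Lambda\gamma S^2\ell(R)^2$. Thus it remains to bound the purely geometric term $n(y)\cdot(y-x)$ from below by a quantity that dominates $\Lambda\gamma S^2\ell(R)^2$.

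For the geometric term I use the local Lipschitz graph representation of $\partial\Omega$. After a rigid motion, $\partial\Omega$ is locally the graph $z_d=\phi(z')$ with $\Omega=\{z_d>\phi(z')\}$ and $|\nabla\phi|\le L$; because $x\in\frac{M_0}{2}B_0$ with $M_0$ large, both $x$ and every $y\in B(x,S\ell(R))\cap\partial\Omega$ lie within the domain of validity of this graph. Let $y_0:=(x',\phi(x'))$ be the vertical projection of $x$ onto $\partial\Omega$ and set $h:=x_d-\phi(x')$, so that $y_0-x=(0,-h)$. A standard Lipschitz-graph computation gives
\begin{equation*}
T^{-1}\ell(R)\le\dist(x,\partial\Omega)\le h\le\sqrt{1+L^2}\,\dist(x,\partial\Omega)\le\sqrt{1+L^2}\,T\ell(R).
\end{equation*}
I then decompose $n(y)\cdot(y-x)=n(y)\cdot(y_0-x)+n(y)\cdot(y-y_0)$ and estimate the two pieces. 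Using $n(y)=(\nabla\phi(y'),-1)/\sqrt{1+|\nabla\phi(y')|^2}$, the first piece is the \emph{interior boost} $n(y)\cdot(y_0-x)=h/\sqrt{1+|\nabla\phi(y')|^2}\ge T^{-1}\ell(R)/\sqrt{1+L^2}$. For the second piece I apply the quasiconvexity condition \eqref{quasiconvex} at $y$ (whose outer normal exists for a.e.\ $y$ by Rademacher) to the point $y_0\in\overline{\Omega}$ with radius $r=|y-y_0|$; this yields $n(y)\cdot(y-y_0)\ge-|y-y_0|\,\omega(|y-y_0|)$.

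The decisive step is the triangle bound $|y-y_0|\le|y-x|+|x-y_0|\le S\ell(R)+h\le(\sqrt{1+L^2}\,T+S)\ell(R)$, which fixes the scale that enters $\omega$ and matches the hypothesis \eqref{smallcon} exactly. Putting all the pieces together yields
\begin{equation*}
n(y)\cdot A(y)A^{-1}(x)(y-x)\ge\frac{T^{-1}\ell(R)}{\sqrt{1+L^2}}-(\sqrt{1+L^2}\,T+S)\ell(R)\,\omega\bigl((\sqrt{1+L^2}\,T+S)\ell(R)\bigr)-\Lambda\gamma S^2\ell(R)^2,
\end{equation*}
and the smallness condition \eqref{smallcon}, after absorbing the factors $\sqrt{1+L^2}$ and $\Lambda\gamma$ that appear on both sides, is precisely what forces the right-hand side to be nonnegative. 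The main obstacle is exactly this last triangle estimate: one cannot replace $|y-y_0|$ by $|y-x|\le S\ell(R)$, because $y_0$ is offset from $x$ by the full height $h\le\sqrt{1+L^2}\,T\ell(R)$; it is this offset that couples $T$ into the argument of $\omega$ and explains why both $S$ and $\sqrt{1+L^2}\,T$ must appear together inside the modulus in the hypothesis.
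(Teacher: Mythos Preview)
Your proof is correct and follows essentially the same strategy as the paper: both split $n(y)\cdot A(y)A^{-1}(x)(y-x)$ into the geometric term $n(y)\cdot(y-x)$ plus the Lipschitz perturbation $n(y)\cdot[A(y)A^{-1}(x)-I](y-x)$, then decompose the geometric part through the vertical boundary projection $y_0$ of $x$, using the normal component of $y_0-x$ for the positive interior boost and quasiconvexity at $y$ to control $n(y)\cdot(y-y_0)$ via the triangle bound $|y-y_0|\le(\sqrt{1+L^2}\,T+S)\ell(R)$. Your write-up is in fact slightly more explicit than the paper's (which somewhat misleadingly calls $y_0$ the ``orthogonal projection''), but the arguments coincide line by line.
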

\begin{proof}
We begin by noting that since $M_0\ge 2S$ is big enough such that $B(x, S\ell(R))\cap \p\Omega\subset B\cap \p\Omega$ can be expressed as a Lipschitz graph.
    To prove the Lemma, it suffices to check that, in the situation above, $$ n(y)\cdot A(y)A^{-1}(x)(y-x)\ge 0\text{ for almost every }y\in \p\Omega\cap B(x, S\ell(R)).$$ Let $x'=\Pi(x)$ be the orthogonal projection of $x$ on $\p\Omega$. Obviously, due to the fact $\text{dist}(x,\p\Omega)\ge T^{-1}\ell(R),$ we can restate $$x=x'+\norm{x-x'}e_d\quad \text{ and }\quad \norm{x-x'}\ge T^{-1}\ell(R).$$ 
    On the other hand, consider a point $y\in\p\Omega\cap B(x,S\ell(R))$ such that the normal $n=n(y)$ exists,
    according to  the definition of quasiconvex domains in (\ref{quasiconvex}) and $\txt{dist}(x,\p\Omega)\le T\ell(R)$ we have the inequalities: 
    \begin{equation}\begin{aligned}\label{quasiconvexdef}
    &\norm{x'-y}\le \norm{x-x'}+\norm{x-y}\le \sqrt{1+L^2}T\ell(R)+S\ell(R)\\
        &n(y)\cdot (x'-y)\le (\sqrt{1+L^2}T+S)\ell(R)\omega((\sqrt{1+L^2}T+S)\ell(R)).
        \end{aligned}
    \end{equation}
   Considering the assumptions on \(A\) and the conditions specified in (\ref{quasiconvex}) and (\ref{quasiconvexdef}), we can deduce the following result:
    \begin{equation*}\begin{aligned}
       &n(y)\cdot A(y)A^{-1}(x)(y-x)\\&\ge n(y)\cdot (x'-x)+ n(y)\cdot (y-x')-\gamma\Lambda\norm{y-x}^2\\
       &\ge \frac{\ell(R)}{T\sqrt{1+L^2}}-(\sqrt{1+L^2}T+S)\ell(R)\omega((\sqrt{1+L^2}T+S)\ell(R))-\gamma\Lambda(S\ell(R))^2\\
       &\ge \ell(R)(\frac{1}{T\sqrt{1+L^2}}-(\sqrt{1+L^2}T+S)\omega((\sqrt{1+L^2}T+S)\ell(R))-\gamma\Lambda S^2\ell(R))\ge 0.
    \end{aligned} 
    \end{equation*}    
\end{proof}

Next, we recall the regularity of elliptic equation over Lipschitz domain; see \cite{K94,KS11} and references therein. Assume that $A$ is H$\ddot{o}$lder continuous and let $u$ be an $A$-harmonic function in $\Omega\cap B_2$ and $u|_{\p\Omega\cap B_2}$ in the sense of trace. Then the nontangential maximal function $(\nabla u)^{*}|_{\p\Omega\cap B_1}\in L^{2}(\p\Omega\cap B_1).$ This particularly implies that for almost every $x\in \p\Omega\cap B_1, \nabla u(y)\to \nabla u(x)$ as $\Omega\ni y\to x$ nontangentially. Therefore, $\frac{\p u}{\p\nu}=n\cdot A\nabla u$ exists almost everywhere, thus confirming the well-defined nature of the following quantitative Cauchy uniqueness over Lipschitz domains in \cite{ZZ23}. We point out that this lemma is an important tool for the proof of the Lemma \ref{keylemma1}.
\begin{lemma}\label{Cauchyuniquess}
  Let $\Omega$ be a Lipschitz domain and $0\in \p\Omega.$ There exists $0<\tau =\tau(d,L,A)<1$ such that if u is an A-harmonic function in $B_{1,+}$ satisfying $\Norm{u}_{L^2(B_{1,+})}=1$ and $\norm{u}+\norm{n\cdot A\nabla u}\le \epsilon\le 1$ on $B_1\cap \p\Omega,$ then$$ \Norm{u}_{L^{\infty}(B_{1,+})}\le C\epsilon^{\tau},$$ where $C$ depends only on $d, L$ and $A$.

\end{lemma}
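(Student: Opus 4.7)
The plan is to convert the quantitative Cauchy uniqueness into a propagation-of-smallness statement for a single $\cL$-harmonic function on a full Euclidean ball, and then to chain together applications of the three-sphere inequality \eqref{3sphere} from an interior point where smallness is evident to any target point in the upper half-ball.

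First I would extend $u$ across $B_1\cap\p\Omega$ by setting $\tilde u=u$ in $B_1\cap\Omega$ and $\tilde u=0$ in $B_1\setminus\overline{\Omega}$. A direct distributional computation, keeping track of both the trace jump of $\tilde u$ and the jump of its conormal derivative, gives
$$\cL\tilde u = (n\cdot A\nabla u)\,d\sigma_{\p\Omega} + \nabla\cdot\bigl(u\,An\,d\sigma_{\p\Omega}\bigr)\quad\text{in }\mathcal{D}'(B_{3/4}).$$
The hypothesis $\norm{u}+\norm{n\cdot A\nabla u}\le\e$ on $B_1\cap\p\Omega$, combined with the existence of the nontangential trace of $\nabla u$ recalled immediately before the statement, yields $\Norm{\cL\tilde u}_{H^{-1}(B_{3/4})}\lesssim\e$. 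Solving the Dirichlet problem $\cL w=\cL\tilde u$ in $B_{3/4}$ with $w=0$ on $\p B_{3/4}$ via Lax--Milgram produces $\Norm{w}_{H^1_0(B_{3/4})}\lesssim\e$, so $v:=\tilde u-w$ is $\cL$-harmonic in $B_{3/4}$, with $\Norm{v}_{L^2(B_{3/4})}\lesssim 1$ from the normalization $\Norm{u}_{L^2(B_{1,+})}=1$, and $\Norm{v}_{L^2(B_r(y_0))}\lesssim\e$ on any small ball $B_r(y_0)\s B_{3/4}\setminus\overline{\Omega}$.

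Next I would pick such a centre $y_0\in B_{3/4}\setminus\overline{\Omega}$ and apply the three-sphere inequality \eqref{3sphere} to $v$ at $y_0$: for small balls entirely contained in the complement of $\overline{\Omega}$ the $A$-starshape condition holds trivially, and \eqref{3sphere} interpolates the small $L^2$-norm on $B_r(y_0)$ against the uniformly bounded norm on a larger fixed ball, improving the smallness to $\e^{\alpha}$ at an intermediate scale for some $\alpha\in(0,1)$. Chaining this estimate through a finite sequence of overlapping balls connecting $y_0$ to a given target point $z$ in the upper half-ball (whose length is controlled by the Whitney cuboid structure of Section~\ref{Whitneydecomposition} and hence depends only on $d,L,A$) yields $\Norm{v}_{L^2(B_\rho(z))}\lesssim\e^{\tau_0}$ for some $\tau_0\in(0,1)$. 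Adding back the $O(\e)$ correction $w$ and promoting the $L^2$-bound to an $L^\infty$-bound via the standard De Giorgi--Nash--Moser estimate then gives the pointwise estimate $\Norm{u}_{L^\infty}\lesssim\e^{\tau}$ for a slightly smaller $\tau\in(0,1)$, which is the conclusion.

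The main obstacle is justifying rigorously the bound $\Norm{\cL\tilde u}_{H^{-1}(B_{3/4})}\lesssim\e$ in the Lipschitz setting: both the trace of $u$ and the conormal derivative on $\p\Omega$ must be interpreted through nontangential limits rather than classical traces, which is precisely why the regularity results of \cite{K94,KS11} are recalled immediately before the lemma statement. A secondary difficulty is ensuring that the chain of three-ball inequalities stays strictly inside $\Omega$ with balls of comparable radii; this is where the Whitney cuboid construction of Section~\ref{Whitneydecomposition} enters, since any point in $B_{1/2,+}$ can be joined to the initial interior centre $y_0$ by a sequence of cuboids of side length bounded below, producing a uniform bound on the chain length depending only on $d,L,A$.
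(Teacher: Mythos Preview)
The paper does not prove this lemma; it is quoted from \cite{ZZ23} (see the sentence immediately preceding the statement). So there is no in-paper proof to compare against, and your proposal has to stand on its own.

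Your overall strategy---extend by zero, subtract an $\cL$-harmonic corrector, then propagate smallness via a chain of three-ball inequalities---is natural, but there is a genuine gap at the first step. From
\[
\cL\tilde u=(n\cdot A\nabla u)\,d\sigma_{\p\Omega}+\nabla\cdot\bigl(u\,An\,d\sigma_{\p\Omega}\bigr)
\]
you claim $\Norm{\cL\tilde u}_{H^{-1}(B_{3/4})}\lesssim\e$. The first summand is fine: a surface measure with $L^\infty$ density pairs with $\varphi\in H^1_0$ via the trace inequality. The second summand is not: pairing $\nabla\cdot(u\,An\,d\sigma)$ with $\varphi$ produces $\int_{\p\Omega}u\,(An\cdot\nabla\varphi)\,d\sigma$, which requires the trace of $\nabla\varphi$ on $\p\Omega$. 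For a generic $\varphi\in H^1_0$ this trace lies only in $H^{-1/2}(\p\Omega)$ and does not pair with the $L^\infty$ density $u\,An$; already in the flat model, testing against $\varphi\approx x_d\log\lvert x_d\rvert$ (which is in $H^1$) makes the pairing diverge. Hence $\nabla\cdot(u\,An\,d\sigma)\notin H^{-1}$ in general, and the Lax--Milgram construction of $w$ fails as written. The nontangential regularity from \cite{K94,KS11} that you invoke guarantees the \emph{existence} of the boundary data, not this duality bound. A workable repair is to replace the zero extension by one that matches the trace of $u$ across $\p\Omega$ (so that $\tilde u\in H^1(B_{3/4})$ and only a conormal jump remains), and then control the conormal derivative of the exterior piece---this is where the Lipschitz boundary regularity theory is actually needed.

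Your final paragraph is also internally inconsistent. Once $v$ is $\cL$-harmonic in the full ball $B_{3/4}$, the three-ball inequality \eqref{3sphere} applies with no $A$-starshape hypothesis (there is no boundary), and a chain from $y_0$ to any target $z\in B_{1/2}$ has length bounded by a constant depending only on $d$. There is no reason to ``stay strictly inside $\Omega$'', and the Whitney structure of Section~\ref{Whitneydecomposition}, which decomposes $\Omega$ rather than $B_{3/4}$, is irrelevant to this step.
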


We now present the first key lemma needed to prove Theorem \ref{Minskithm1}. This lemma controls the behavior of the doubling index in the tree of Whitney cuboids defined in the previous section. The approach of our proof is similar to \cite[Lemma 3.1]{T21} for harmonic functions in \(C^1\) domains and \cite[Lemma 4.1]{G22} for general elliptic equations in \(C^1\) domains. Additionally, we compare this lemma with the well-known hyperplane lemma found in  \cite[Lemma 7]{LMNN21} and \cite[Lemma 4.2]{ZZ23}. Note that the following key lemma quantifies the probability of a decrease in the doubling index as one approaches the boundary.

Define $N_u^*(x,r)=N_u(x,r)+1$.
\begin{lemma}\label{keylemma1}
    Let $N_0>1$ be big enough. There exists some absolute $\delta>0$ such that for all $S\gg 1$ big enough the following holds. Let $R$ be a cuboid in $\mathcal{D}_{\W}(R_0)$ with $\ell(R)
    $ small enough depending on $S$ and $\delta$ that satisfies $N_u^*(x_R, S\ell(R))\ge N_0.$ Then, there exists some positive integer $K=K(S)$ big enough such that if we let $$ \mathcal{G}_{K}(R)=\left \{ \Q\in\mathcal{D}_{\W}^{K}(R): N_u^*(x_Q,S\ell(Q))\le \frac{1}{2}N_u^*(x_R, S\ell(R))\right \} $$ then :
    \begin{itemize}
    \item [(1)] $m_{d-1}(\bigcup_{\Q\in \mathcal{G}_{K}(R)}\Pi(\Q))\ge \delta m_{d-1}(\Pi(R)),$
    \item [(2)] for $\Q\in\mathcal{D}_{\W}^{K}(R)$, it holds: $N_u^*(x_Q,S\ell(Q))\le (1+CS^{-1})N_u^*(x_R,S\ell(R))$.
\end{itemize}

    Note that (2) does not require $N_u^*(x_R, S\ell(R))\ge N_0.$ It is important that $\delta $ does not depend on S and $N_0$ only depends on $d$, $\omega, L$ and $A$.
\end{lemma}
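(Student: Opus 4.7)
I would handle (2) as a direct consequence of the tools in Section~\ref{sectionmonotonicityofdoublingindex}, and prove (1) by contradiction, combining an iterated doubling decay with the quantitative Cauchy uniqueness of Lemma~\ref{Cauchyuniquess}.

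For (2), fix any $Q\in\mathcal{D}_{\W}^K(R)$. The Whitney structure gives $\ell(Q)=2^{-K}\ell(R)$ and $|x_Q-x_R|\preceq \ell(R)$. I would first verify via Lemma~\ref{shift control doubling} that the $A$-starshape condition holds at $x_R$ on scale $\sim S\ell(R)$; this is where the smallness hypothesis on $\ell(R)$ (in terms of $S$ and $\omega$) is used. Next I apply the shift-center estimate (Lemma~\ref{sshiftcenter}) with base point $x_R$, radius $\sim S\ell(R)$, and displacement $\theta=|x_Q-x_R|$, so that $\theta/(S\ell(R))\preceq S^{-1}$, followed by the almost monotonicity (Lemma~\ref{Keymdoub}) iterated to descend from scale $\sim S\ell(R)$ down to $S\ell(Q)=2^{-K}S\ell(R)$ at the base point $x_Q$. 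Each step contributes a factor $1+O(\gamma r)$ at scale $r\le S\ell(R)$, so the accumulated error is of order $1+O(S^{-1}+\gamma S\ell(R))$; choosing $\ell(R)$ small enough to absorb $\gamma S\ell(R)$ into $S^{-1}$ and using $N_u^*\ge 1$ to fold residual additive errors into a multiplicative factor yields (2).

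For (1), I argue by contradiction. Set $M:=N_u^*(x_R,S\ell(R))\ge N_0$ and suppose
$$B:=\bigcup_{Q\in\mathcal{D}_{\W}^K(R)\setminus\mathcal{G}_K(R)}\Pi(Q)$$
satisfies $m_{d-1}(B)>(1-\delta)m_{d-1}(\Pi(R))$. For every bad $Q$, $N_u^*(x_Q,S\ell(Q))>M/2$; by almost monotonicity this lower bound persists, up to $O(1)$ losses, at all larger scales $r\in[S\ell(Q),S\ell(R)]$ once $N_0$ is large. Telescoping the identity $N_u(x,r)=\log(J_u(x,2r)/J_u(x,r))$ over the $K$ dyadic scales between $S\ell(Q)$ and $S\ell(R)$ then gives
$$J_u(x_Q,\ell(Q))\le J_u(x_Q,S\ell(Q))\le 2^{-cKM}\,J_u(x_Q,S\ell(R))\le 2^{-cKM}\,J_u(x_R,CS\ell(R)),$$
where the last step uses the inclusion $F(x_Q,S\ell(Q))\subset F(x_R,CS\ell(R))$ together with a shift-center bound as in (2). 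Since $J_u(x_Q,\ell(Q))\approx\int_Q u^2$ by $\mathrm{dist}(Q,\partial\Omega)\approx\ell(Q)$ and the boundedness of the weight $\mu$, summing over the $(1-\delta)$-proportion of bad cuboids shows that $u$ is $L^2$-small on a slab $V\subset\Omega$ sitting above the boundary patch $\Sigma_R\subset\Sigma$ associated with $B$, relative to the global $J_u(x_R,CS\ell(R))$. Because $u=0$ on $\Sigma_R$, Caccioppoli up to the boundary transfers this to $L^2$ smallness of the conormal $n\cdot A\nabla u$ on a substantial subset of $\Sigma_R$; Lemma~\ref{Cauchyuniquess} applied on a ball of radius $\sim\ell(R)$ centered at a point of that subset promotes the boundary smallness to an $L^\infty$ bound on $u$ over an interior ball of comparable radius, contradicting the lower bound on $J_u(x_R,S\ell(R))$ implicit in $M\ge N_0$.

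The main obstacle is closing the contradiction while keeping $\delta$ an \emph{absolute} constant. The quantifier order must be: $\delta$ fixed absolutely first, then $N_0=N_0(d,\omega,L,A)$ large enough that $2^{-cKM}$ can beat the Cauchy exponent $\tau$, then $S$ large so that shift-center errors are $O(S^{-1})$, then $K=K(S)$ large enough to generate the decay $2^{-cKM}$ from the bad cuboids and to dominate the $2^{K(d-1)}$ combinatorial loss when summing over bad $Q$, and finally $\ell(R)$ small enough to absorb the $\gamma$-dependent errors. The fact that $\delta$ does not depend on $S$ reflects that the bad-cuboid slab occupies an $S$-independent proportion of $\Pi(R)$; all $S$-dependence is confined to the quantitative decay rate, which enters only on the small side of the Cauchy inequality.
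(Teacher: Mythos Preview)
Your argument for (2) is correct and matches the paper's.

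For (1), however, there is a genuine gap in the strategy, and it concerns precisely the point you flag as ``the main obstacle'': keeping $\delta$ absolute.

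You run the contradiction directly at generation $K=K(S)$, assuming the bad cuboids cover a $(1-\delta)$ proportion of $\Pi(R)$. Your telescoping correctly gives $\int_Q u^2 \le e^{-cKM} J_u(x_R,CS\ell(R))$ for each bad $Q$. But to invoke Lemma~\ref{Cauchyuniquess} you then need smallness of \emph{both} Cauchy data on the \emph{entire} boundary patch of a ball of scale $\sim\ell(R)$. You propose to use the actual boundary $\partial\Omega$ (where $u=0$) and obtain conormal smallness from ``Caccioppoli up to the boundary.'' Boundary Caccioppoli controls $\int_{B\cap\Omega}|\nabla u|^2$, not the $L^\infty$ trace of $n\cdot A\nabla u$ on $\partial\Omega$; there is no direct mechanism here that turns interior $L^2$ smallness at height $2^{-K}\ell(R)$ into pointwise smallness of the conormal on $\partial\Omega$. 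If instead you use the slab at height $2^{-K}\ell(R)$ as an artificial boundary (where interior estimates do give $|u|+\ell(Q)|\nabla u|$ small), you only control a $(1-\delta)$ portion of it; the $\delta$ portion over the good cuboids is uncontrolled, and the good cuboids can be so dispersed that no ball of radius $\gg 2^{-K}\ell(R)$ lies entirely over the bad region. Either way the Cauchy uniqueness step does not close, and if you try to force it you are driven to $\delta$ depending on $K$, hence on $S$.

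The paper avoids this by decoupling two scales. It introduces an \emph{absolute} intermediate level $j$ (depending only on $d,L,A$), considers the family $J$ of Whitney cuboids meeting the lifted graph $\Gamma_1=\Gamma+2^{-j}\ell(R)e_d$, and proves by contradiction that \emph{some} $Q_0\in J$ has $N_u^*(x_{Q_0},\mathrm{diam}(20WC^*Q_0))\le N_u^*(x_R,S\ell(R))/(4C_0^2)$. The contradiction hypothesis is that \emph{all} $Q\in J$ are bad, so $|u|$ and $|\nabla u|$ are small everywhere on $\Gamma_1$, and Lemma~\ref{Cauchyuniquess} is applied to the region \emph{above} $\Gamma_1$ with $\Gamma_1$ itself as the Lipschitz boundary. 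Once $Q_0$ is found, one sets $K=j+[\log_2 S]$ so that $S\ell(Q)\approx\ell(Q_0)$ for $Q\in\mathcal{D}_\W^K(R)$ with $\Pi(Q)\subset\Pi(Q_0)$; a single shift-center estimate then puts all such $Q$ into $\mathcal{G}_K(R)$. This yields $\delta\approx 2^{-j(d-1)}$, manifestly independent of $S$. The missing idea in your plan is exactly this intermediate absolute scale $j$ and the use of $\Gamma_1$ rather than $\partial\Omega$ as the surface for Cauchy uniqueness.
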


\begin{proof}[Proof of Lemma \ref{keylemma1}]
   We choose $R\in \mathcal{D}_{\W}(R_0)$ with $\ell(R)$ small enough. Let $$\Gamma=\left \{x=(x',x_d): x'\in \Pi( \frac 12 R),x_d=\phi(x') \right \}. $$ For some $j\gg 1$ to be fixed below depend only on $d$, $\omega,L$ and $A$. Define $$ \Gamma_1=\Gamma + 2^{-j}\ell(R) e_d=\left \{x=(x',x_d): x'\in \Pi( \frac 12 R),x_d=\phi(x')+2^{-j}\ell (R) \right \}.$$ From now on, we will denote by $J$ the family of cuboids from $\W$ that intersect $\Gamma_1$. By our construction of the Whitney cuboids and the definition of $\Gamma_1$, choosing $j$ big enough, we have $$ \ell(Q)\approx 2^{-j}\ell (R)\quad \text{and}\quad  \Pi(Q)\subset\Pi(R) \text{ for all }Q\in J.$$ Denote by $\txt{Adm}(2WQ)$ the set of points $x\in \Omega\cap 2WQ$ and $B(x, \txt{diam}(100\Lambda WC^* Q))\cap \Omega$ is $A$-starshaped with respect to $x.$ Recall that $W$ is one of the constants in the definition of Whitney cuboids depending only on $d$, $\Lambda$ is the ellipticity constant of $A$, and $C^*$ is the constant in Lemma \ref{sshiftcenter}. We assume that  $\ell(Q)$ is small enough (depending on $d, \omega, A$) so that $3Q\subset \text{Adm}(2WQ)$(using Lemma \ref{shift control doubling}). Then by Lemma \ref{sshiftcenter}, we have 
   \begin{equation}\label{property 1}
       \sup_{x\in \txt{Adm}(2WQ)} N_{u}^*(x,\txt{diam}(10WC^*Q))\le C_0N_{u}^*(x_Q,\txt{diam}(20WC^*Q))
   \end{equation}
   where $C_0$ is an absolute constant. Note that $$\norm{x-x_Q}\le \txt{diam}(WQ)< \frac{\txt{diam}(10WC^*Q)}{C^*}$$ since $x\in 2W\Q$, hence it satisfies the condition required by Lemma \ref{sshiftcenter}. 
   \begin{claim}
       There exists some $Q\in J$ such that 
       \begin{equation}\label{claim}
       N_{u}^*(x_Q,\txt{diam}(20WC^*Q))\le \frac{N_{u}^*(x_R, S\ell(R))}{4C_0^2}
   \end{equation}
   if $j$ is big enough (but independent of $S$) and we assume that $\ell(R)$ is small enough depending on $j, S, \omega, L, d$ and $A.$
   \end{claim}
   \begin{proof}[Proof of the claim]
      From now on, we denote $N^*=\frac{N_{u}^*(x_R, S\ell(R))}{4C_0^2}$, $\ell_Q=\txt{diam}(20WC^*Q)$, $\ell_R=\txt{diam}(20WC^*R)$.
      We prove by contradiction. Assume for all $Q\in J, N_{u}^*(x_Q,\ell_Q)>N^*$. By the fact $\ell_Q\approx 2^{-j}\ell_R$, by choosing $j$ big enough, there exists $j_0=j_0(d, L)>0$ such that $2^{j-j_0}\ell_Q\le \frac12 \ell_R\le 2^{j+j_0}\ell_Q.$ We may let $$(1+C\gamma \ell_R)^{j+j_0}\le 2\quad \txt{and}\quad 2^{5j}(\omega(2^{4j}\ell_R)+2^{4j}\ell_R)\ll 1.$$ Below, we will make repeated use of this property, often without further reference. By Lemma \ref{shift control doubling} and the almost monotonicity in Lemma \ref{Keymdoub}, we have for all $1\le i\le j+j_0,$
      $$ N_{u}^*(x_Q, \ell_Q)\le (1+C\gamma \ell_R)^{i}N_u^*(x_Q, 2^i\ell_Q)\le 2N_u^*(x_Q, 2^i\ell_Q).$$
      As a result,$$ N_u(x_Q, 2^i\ell_Q)>\frac{N^*-2}{2}.$$
      Thus, we obtain that
      \begin{equation*}
          \begin{aligned}
              J_u(x_Q, 4\ell_Q)&=J_u(x_Q, 2^{j-j_0}\ell_Q)\txt{exp}(-\sum_{i=2}^{j-j_0-1}N_{u}(x_Q,2^{i}\ell_Q))\\
              &\le J_u(x_Q,\frac{1}{2}\ell_R)\txt{exp}(-(j-j_0-2)\frac{N^*-2}{2})\\
              & \le J_u(\Tilde{x}_R,\ell_R)\txt{exp}(-(j-j_0-2)\frac{N^*-2}{2}).
          \end{aligned}
      \end{equation*}
In view of the definition of $J_u$ in (\ref{defJ1}) and (\ref{defJ2}), we have 
\begin{equation}\label{cauchy1}
    \int_{B(x_Q, \frac{4}{\Lambda}\ell_Q)}u^2\le CJ_u(\Tilde{x}_R,\ell_R)\txt{exp}(-\frac{jN^*}{8}),\quad \text{for each }Q\in J,
\end{equation}
where we have chosen $j\ge 2j_0+4,$  $N_0\ge 16C_0^2$ such that $(j-j_0-2)\frac{N^*-2}{2}\ge \frac{jN^*}{8}$.  Then (\ref{cauchy1}) implies 
\begin{equation}\label{cauchy1*}
    \sup_{y\in\Gamma_1}\sup_{B(y,\frac{1}{\Lambda}\ell_Q)} \norm{u}\le CJ_u^{\frac 12}(\Tilde{x}_R,\ell_R)(\ell_Q)^{-d/2}e^{-\frac{jN^*}{16}}.
\end{equation}
Here, we utilize \(C^* \geq \Lambda\) to ensure that \(B(\Gamma_1, \frac{1}{\Lambda}\ell_Q) \subset \bigcup_{Q \in J} B(x_Q, \frac{2}{\Lambda}\ell_Q)\).
By the interior estimate and (\ref{cauchy1*}), we have 
\begin{equation}\label{cauchy2}
    \sup_{\Gamma_1}\norm{u}+\ell(Q)\sup_{\Gamma_1}\norm{\nabla u}\le C(\ell(Q))^{-\frac{d}{2}}J_u^{\frac 12}(\Tilde{x}_R,\ell_R)e^{-\frac{jN^*}{16}}.
\end{equation}
Denote $\Tilde{x}_R$ as the projection of $x_R$ on $\Gamma_1$.
Let $$\Tilde{B}_{\frac{\ell(R)}{2},+}(\Tilde{x}_R)=\left \{ x=(x',x_d):\norm{x-\Tilde{x}_R}\le\frac{1}{2}\ell(R), x_d>\phi(x')+2^{-j}\ell(R)e_d \right \}.$$
Therefore, applying Lemma \ref{Cauchyuniquess} in $\Tilde{B}_{\frac{\ell(R)}{2},+}(\Tilde{x}_R)$ with Lipschitz boundary $\Gamma_1$, we obtain from (\ref{cauchy2}) that 
\begin{equation}\label{cauchy3}
    \sup_{\Tilde{B}_{\frac{\ell(R)}{4},+}(\Tilde{x}_R)}\norm{u}\le C(\ell(R))^{-\frac{d}{2}}2^{\tau jd/2+\tau j}J_u^{\frac 12}(\Tilde{x}_R,\ell_R)e^{-\frac{\tau jN^*}{16}},
\end{equation}
where we also used the fact $\ell(Q)\approx 2^{-j}\ell(R)$. Combine with (\ref{cauchy1*}) again, (\ref{cauchy3}) yields 
\begin{equation}\label{cauchy4}
    J_{u}(\Tilde{x}_R, \frac{\ell(R)}{8\Lambda})\le  C2^{\tau jd+jd}J_u(\Tilde{x}_R,\ell_R)e^{-\frac{\tau jN^*}{8}}.
\end{equation}
Finally, we relate $ J_{u}(\Tilde{x}_R, \frac{\ell(R)}{8\Lambda})$ to $J_u(\Tilde{x}_R,\ell_R)$ by a sequence of doubling inequalities. Let $m$ be the smallest integer such that $2^m\frac{\ell(R)}{8\Lambda}>\ell_R.$ Note that $m$ only depends on $d, A,$ and $L.$ By the almost monotonicity of the doubling index, for $1\le i\le m,$
$$ N_u^*(\Tilde{x}_R,2^{-i}\ell_R)\le 2N_u^*(\Tilde{x}_R,\frac{1}{2}\ell_R.$$ 

On the other hand, let S be large enough and $\log_2 S=[\log_2 S]$, by choosing $\ell_R$ small enough depends on $j, S, d, L$ and $A$, by Lemma \ref{Keymdoub} and Lemma \ref{sshiftcenter}, we can obtain that 
$$N_u^*(\Tilde{x}_R,\frac{1}{2}\ell_R\le C_0N_u^*(x_R,\ell_R)\le 2C_0N_u^*(x_R, S\ell(R))=8C_0^3N^*.$$
This yields
\begin{equation}\label{cauchy5}
    \begin{aligned}
        \log\frac{J_u(\Tilde{x}_R,l_R)}{J_u(\Tilde{x}_R,\frac{\ell(R)}{8\Lambda})}&\le \log \frac{J_u(\Tilde{x}_R,\ell_R)}{J_u(\Tilde{x}_R,2^{-m}\ell_R)}\\
        &\le \sum_{i=1}^{m} N_u(\Tilde{x}_R, 2^{-i}\ell_R)\le 2m N_u^{*}(\Tilde{x}_R, \frac{1}{2}\ell_R)\\
        &\le 16mC_0^3N^*.
    \end{aligned}
\end{equation}
Comparing (\ref{cauchy4}) and (\ref{cauchy5}), we obtain $$ (\frac{\tau}{8}-\frac{16mC_0^3}{j})N^*\le \frac{C}{j}+ (\tau d+d)\log2.$$
Clearly, this is a contradiction if $j$ and  $N_0$ are big enough. Note that $j$ and $N_0$ depends only on $d, L, $ and $A$.
   \end{proof}
Now we are ready to introduce the set $\hat{\mathcal{G}}_K(R).$ Fix $Q_0\in J$ such that (\ref{claim}) holds for $Q_0$. Define $$ \hat{\mathcal{G}}_K(R)= \left \{Q\in \mathcal{D}_{\W}^{j+k}(R):\Pi(Q)\subset\Pi(Q_0)  \right \}\txt{ with }k=[\log_2 S].$$
So we have $\hat{\mathcal{G}}_K(R)\subset \mathcal{D}_{\W}^{K}(R)$ with $K=k+j$ and it holds $S\ell(Q)\approx \ell(Q_0)$ for every $Q\in \hat{\mathcal{G}}_K(R)$. If $P\in \hat{\mathcal{G}}_K(R),$ taking into account that $x_P\in\txt{Adm}(2WQ_0)$ for $S$ sufficiently big. Hence by (\ref{property 1}) and (\ref{claim}), we have 
\begin{equation*}
    \begin{aligned}
        N_u^*(x_P, S\ell(P))&\le C_0N_u^*(x_P, \txt{diam}(10WC^*Q_0))\\
        &\le C_0^2N_u^*(x_{Q_0}, \txt{diam}(20WC^*Q_0))\le \frac{N_{u}^*(x_R, S\ell(R))}{4} \\
        & \le \frac{1}{2}N_{u}^*(x_R, S\ell(R)).
    \end{aligned}
\end{equation*}
Notice also that $$m_{d-1}(\bigcup_{\Q\in \mathcal{G}_{K}(R)}\Pi(\Q))\ge m_{d-1}(\bigcup_{\Q\in \hat{\mathcal{G}}_K(R)}\Pi(\Q))= (\ell(Q_0))^{d-1}\approx (2^{-j}\ell (R))^{d-1}$$
and recall that $j$ is independent of $S$. So (1) holds with $\delta\approx 2^{-j(d-1)}$.

The property (2) is a consequence of Lemma \ref{Keymdoub} and Lemma \ref{sshiftcenter}. Indeed, for any $P\in \mathcal{D}_{\W}^K(R),$ since $\norm{x_P-x_R}\preceq \ell(R)$, taking $\frac{\theta}{S\ell(R)}\approx \frac{1}{S}$ in the Lemma \ref{sshiftcenter}, we deduce that 
\begin{equation*}
    \begin{aligned}
        N_u^*(x_P, S\ell(P))&\le (1+ C\gamma S\ell(R))^K N_u^*(x_P, \frac{S}{2} \ell(R))\\
        &\le (1+ C\gamma S\ell(R))^{K} (1+C\gamma S\ell(R)+CS^{-1})N_{u}^*(x_R, S\ell(R))\\
        &\le (1+CS^{-1})N_{u}^*(x_R, S\ell(R)),
    \end{aligned}
\end{equation*}
  provided that $\ell(R)$ sufficiently small depends on $j, S, \omega, L, d$ and $A.$
\end{proof}
\section{ABSENCE OF NODAL POINTS}\label{sectionnozeros}
In this section, we will establish the second main lemma concerning the behaviour of $N$ near the boundary. This lemma shows that if we have a ball near the boundary with bounded doubling index, then we can find a smaller ball centered at the boundary where $u$ does not change sign. Note that, in what follows, we refer to the doubling index $N_u(x,r)$ in section \ref{sectionmonotonicityofdoublingindex}. Moreover, we consider the Lipschitz quasiconvex domain $\Omega$ with the Whitney structure defined in section \ref{Whitneydecomposition}.

First, we will recall two useful lemmas in \cite{ZZ23}.
\begin{lemma}\label{boundarydoubling}
  Let $x_0\in \p\Omega.$ Then there exist $C=C(\Lambda, L)>0$ and $r_0=r_0(\Lambda,L,\gamma,\omega)>0$ such that for $r<r_0$
$$ N_u(x_0, r)\le (1+C(\gamma r+\omega(16r)))N_u(x_0, 2r)+C(\gamma r+\omega(16 r)).$$
\end{lemma}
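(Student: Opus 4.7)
The statement is the boundary analogue of the interior almost monotonicity Lemma \ref{Keymdoub}. My plan is to mimic the proof of Lemma \ref{Keymdoub} (three-ball inequality plugged in with $r_1=r$, $r_2=2r$, $r_3=4r$), with one crucial adjustment: at a boundary point of a merely quasiconvex domain the $A$-starshape condition (\ref{starshape}) fails, and the defect is exactly what produces the extra $\omega(16r)$ term in the conclusion.

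First I would normalize as in Section 2.2 by an affine change of variables sending $x_0$ to the origin and $A(x_0)$ to the identity; under this change $N_u$ and $J_u$ are preserved and the quasiconvexity modulus rescales by a harmless constant factor $\tilde\omega(\rho)\le \Lambda\omega(\Lambda^{1/2}\rho)$. Next, I would quantify the $A$-starshape defect at the boundary. Writing $\partial\Omega$ locally as a graph $x_d=\phi(x')$ with $\phi(0)=0$ and using (\ref{def qua}) together with the Lipschitz bound on $\nabla\phi$ and the estimate $|A(y)-I|\le \gamma|y|$, I would show that for a.e.\ $y\in \p\Omega\cap B_{16r}(0)$,
\begin{equation*}
n(y)\cdot A(y) A(x_0)^{-1}(y-x_0)\ \ge\ -C\bigl(\omega(16r)+\gamma r\bigr)\,|y-x_0|.
\end{equation*}
This is the precise substitute for the strict $A$-starshape assumption in Lemma \ref{Montic}.

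Third, I would revisit the Garofalo--Lin monotonicity calculation from \cite[Appendix A]{ZZ23} underlying Lemma \ref{Montic}. In that proof one integrates by parts on $\p\Omega\cap B_r$ and discards the boundary term $\int (n\cdot Ax)|\nabla u|^2\, d\sigma$, which is nonnegative under (\ref{starshape}). With the weakened lower bound above, this term contributes at most $C(\gamma+r^{-1}\omega(16r))$ to $\frac{d}{dr}\log \N_u$, so an integration from $r$ to $2r$ yields the almost monotonicity
\begin{equation*}
\N_u(x_0,r)\ \le\ \exp\!\bigl(C(\gamma r+\omega(16r))\bigr)\,\N_u(x_0,2r).
\end{equation*}
Combined with the boundary analog of (\ref{H'}), this produces a weakened three-ball inequality in the spirit of (\ref{3sphere}) in which $\beta$ satisfies $|\beta-1|\le C(\gamma r_3+\omega(16r_3))$ and the additive error term is bounded by the same quantity. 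Applying it to $r_1=r$, $r_2=2r$, $r_3=4r$, exactly as in the proof of Lemma \ref{Keymdoub}, then gives the stated inequality $N_u(x_0,r)\le (1+C(\gamma r+\omega(16r)))N_u(x_0,2r)+C(\gamma r+\omega(16r))$.

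The main obstacle will be the $\omega$-bookkeeping in step three: one must verify that the surface-level defect $\omega(16r)$ survives with the correct coefficient through the monotonicity integration and subsequent $\log$-manipulations of the three-ball inequality, without being inflated to something like $\omega(Cr)/r$ or an unbounded logarithmic factor. The required bookkeeping parallels \cite{ZZ23}, with the additional (but standard) task of carrying the Lipschitz-coefficient error $\gamma r$ alongside $\omega(16r)$, reflecting the two ways in which the variable coefficient $A$ and the quasiconvex boundary each perturb the flat-convex base case.
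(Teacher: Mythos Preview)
Your proposal is correct and is precisely the argument carried out in \cite{ZZ23}, which the present paper merely cites for this lemma without reproducing a proof. The outline you give---normalize so that $A(x_0)=I$, use quasiconvexity (applied at each $y\in\partial\Omega\cap B_{16r}$, via (\ref{quasiconvex})) together with the Lipschitz bound on $A$ to obtain the defect estimate $n(y)\cdot A(y)A(x_0)^{-1}(y-x_0)\ge -C(\omega(16r)+\gamma r)|y-x_0|$, feed this into the boundary Rellich identity underlying Lemma~\ref{Montic} so that the discarded boundary term now contributes an error of size $C(\gamma+r^{-1}\omega(16r))$ to $\frac{d}{dr}\log\N_u$, integrate, and then pass to the doubling index via the three-ball inequality with $r_1=r$, $r_2=2r$, $r_3=4r$---matches the derivation in \cite[Lemma~3.2 and Appendix~A]{ZZ23}.
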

The following boundary lemmas that show the absence of zeros near relatively large portion of the boundary for small doubling index. We can find the detailed proof in \cite{ZZ23}.
\begin{lemma}\label{citeabsence}
    Let $\Omega$ be a quasiconvex Lipschitz domain and $0\in \partial\Omega.$ Moreover, $\p\Omega\cap B(0, 32\Lambda)$ can be expressed as a Lipschitz graph with constant $L.$ Let $u$ be A-harmonic in $B(0,16\Lambda)\cap \Omega$ and $u=0$ on $B(0,16\Lambda)\cap \p\Omega.$ Suppose
    $$ N_u(0,\frac{1}{2})\le N.$$
    Then there exist $\omega_0>0, \gamma_0>0, \rho>0$, depending only on $N, L, \Lambda$ and $d$, such that if $\omega(32\Lambda)\le \omega_0$ and $\norm{\nabla A}\le \gamma_0,$ then $\norm{u}>0$ in $B(y,\rho)\cap \Omega$ for some $y\in \p\Omega\cap B_{1/8}.$
\end{lemma}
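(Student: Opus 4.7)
The plan is a compactness-and-contradiction argument. Suppose the conclusion fails; then one finds sequences $\omega_n \downarrow 0$, $\gamma_n \downarrow 0$, and $\rho_n \downarrow 0$, together with quasiconvex Lipschitz domains $\Omega_n \ni 0$ (with modulus $\omega_n$ and Lipschitz constant $L$), symmetric uniformly elliptic matrices $A_n$ with $\norm{\nabla A_n} \le \gamma_n$, and nontrivial $A_n$-harmonic functions $u_n$ on $B(0,16\Lambda)\cap\Omega_n$ vanishing on $B(0,16\Lambda)\cap\partial\Omega_n$, such that $N_{u_n}(0,1/2) \le N$ but $u_n$ has a zero in $B(y,\rho_n)\cap\Omega_n$ for every $y \in \partial\Omega_n \cap B_{1/8}$.

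Normalize each $u_n$ by $J_{u_n}(0,1/2) = 1$. The hypothesis $N_{u_n}(0,1/2) \le N$, together with the almost-monotonicity in Lemma~\ref{boundarydoubling} and the three-ball inequality~(\ref{3sphere}), gives uniform two-sided bounds on $J_{u_n}(0,r)$ for $r$ in a fixed compact sub-range of $(0,1/2)$. Boundary H\"older estimates for uniformly elliptic equations on Lipschitz domains with fixed constant $L$ then produce uniform $C^\alpha$ bounds for $u_n$ up to the boundary on compact subsets. Passing to subsequences, the boundary graphs converge uniformly to a Lipschitz graph $\phi_\infty$; since $\omega_n \to 0$, the inequality~(\ref{quasiconvex}) applied at every boundary point of $\Omega_n$ passes to a supporting half-space condition at every boundary point of the limit, so $\Omega_\infty$ is convex in a neighborhood of $0$. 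Simultaneously $A_n \to A_\infty$ uniformly with $A_\infty$ constant, and $u_n \to u_\infty$ uniformly on compacta, where $u_\infty$ is $A_\infty$-harmonic in $\Omega_\infty \cap B(0,16\Lambda)$, vanishes on the corresponding boundary piece, and satisfies $J_{u_\infty}(0,1/2) = 1$, hence is nontrivial.

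After the linear change of variables $x \mapsto A_\infty^{1/2} x$, the problem reduces to a harmonic function on a convex domain. The Adolfsson--Escauriaza--Kenig resolution of Lin's conjecture for convex domains \cite{AEK95} implies that the singular set $\{y \in \partial\Omega_\infty \cap B_{1/8} : \nabla u_\infty(y) = 0\}$ has zero $(d-1)$-dimensional Hausdorff measure. Pick $y_\infty \in \partial\Omega_\infty \cap B_{1/8}$ at which $\partial\Omega_\infty$ is differentiable and the normal derivative of $u_\infty$ is nonzero; the Hopf boundary-point lemma (valid at a differentiable boundary point of a convex domain via the interior cone condition) then yields a radius $2\rho_\infty > 0$ such that $u_\infty$ has strict sign on $B(y_\infty,2\rho_\infty) \cap \Omega_\infty$. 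Hausdorff convergence of $\partial\Omega_n$ to $\partial\Omega_\infty$ provides $y_n \in \partial\Omega_n \cap B_{1/8}$ with $y_n \to y_\infty$, and uniform convergence of $u_n$ then forces $\norm{u_n} > 0$ on $B(y_n,\rho_\infty)\cap\Omega_n$ for large $n$, contradicting the existence of a zero of $u_n$ in $B(y_n,\rho_n)\cap\Omega_n$ once $\rho_n < \rho_\infty$.

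The main obstacle is to secure compactness and regularity uniform across varying domains, strong enough to transfer the localized vanishing of $u_n$ to $u_\infty$ and to deploy the convex case of Lin's conjecture. Because the $\Omega_n$ are only uniformly Lipschitz, $C^{1,\alpha}$ boundary estimates are unavailable, so one cannot speak directly of limits of $\nabla u_n$ on $\partial\Omega_n$; the argument must proceed through uniform $C^\alpha$ boundary bounds together with the nontangential Cauchy-uniqueness estimate in Lemma~\ref{Cauchyuniquess}, which together prevent $u_n$ from oscillating on ever-finer scales while the doubling index stays bounded. A second subtle point is confirming that the supporting half-space condition inherited in the limit from $\omega_n \to 0$ in (\ref{quasiconvex}) genuinely upgrades to convexity of $\Omega_\infty$; this uses the fact that (\ref{quasiconvex}) is imposed at every boundary point, with the supporting direction $n_y$ varying continuously, so that supporting hyperplanes at all boundary points force global convexity on the relevant scale.
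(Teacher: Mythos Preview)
The paper does not actually prove this lemma; it simply quotes it from \cite{ZZ23}. Your compactness-and-contradiction strategy is in fact the natural one (and is, in outline, what \cite{ZZ23} does), but as written the final step contains a genuine gap.

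The problem is the sentence ``uniform convergence of $u_n$ then forces $\norm{u_n} > 0$ on $B(y_n,\rho_\infty)\cap\Omega_n$ for large $n$.'' Uniform convergence $u_n\to u_\infty$ cannot give this, because $u_\infty$ vanishes on $\partial\Omega_\infty$: on $B(y_\infty,2\rho_\infty)\cap\Omega_\infty$ the function $u_\infty$ is positive but \emph{not bounded below away from zero}. The hypothesised zero $z_n\in B(y_n,\rho_n)\cap\Omega_n$ of $u_n$ satisfies $\dist(z_n,\partial\Omega_n)\le \rho_n\to 0$, so $z_n$ drifts to the boundary, where $u_\infty$ is itself small; the inequality $|u_n(z_n)-u_\infty(z_n)|<\varepsilon$ is perfectly consistent with $u_n(z_n)=0$. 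In short, $C^\alpha$ compactness up to the boundary is too weak to transfer strict positivity of $u_\infty$ back to $u_n$ in a boundary neighbourhood.

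What is missing is a quantitative linear lower bound $u_\infty(x)\ge c\,\dist(x,\partial\Omega_\infty)$ on $B(y_\infty,2\rho_\infty)\cap\Omega_\infty$ (boundary Harnack in the convex limit gives this once you know $\partial_\nu u_\infty(y_\infty)\neq 0$), together with a matching uniform statement for $u_n$: either uniform boundary gradient/Lipschitz control, or a uniform boundary Harnack comparison of $u_n$ against a fixed barrier in $\Omega_n$, so that any interior zero $z_n$ would force $u_n$ to be $o(\dist)$ along a nontangential approach and hence contradict the doubling bound via Lemma~\ref{Cauchyuniquess} or the three-ball inequality. You gesture at this when you mention Lemma~\ref{Cauchyuniquess}, but the argument as written never uses it, and without it the contradiction does not close.
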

Now, we are ready to give the second key lemma.
\begin{lemma}\label{copykey}
Let $\Omega$ be a quasiconvex Lipschitz domain and fix some $x_0\in\p\Omega.$ 
    For any $N>0$ and $S\gg 1$ large enough, there exist positive $r_0=r_0(N, S, d, L,\Lambda,\omega,\gamma)$ and $\rho=\rho(N, d, L, \Lambda,\omega,\gamma)$ such that the following statement holds. Let $u$ be A-harmonic in $\Omega$ and $u=0$ on $B(x_0,2r_0)\cap\p\Omega$. Consider $R\in \W$ such that $R\subset B(x_0,\frac{3}{2}r_0)\cap \Omega$. If $Q$ is a cuboid in $\mathcal{D}_{\W}(R)$ satisfying $N(x_Q, S\ell(Q))\le N,$ then there exists a ball $\Tilde{B}$ centered in $\p\Omega \cap \mathcal{C}(Q)$ with radius $\rho\ell(Q)$ such that $u$ does not vanish in $\Tilde{B}\cap\Omega.$
\end{lemma}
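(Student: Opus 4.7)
The plan is to reduce the lemma to Lemma \ref{citeabsence} through a rescaling argument, preceded by a transfer of the doubling-index bound from the interior point $x_Q$ to a nearby boundary point $y_Q \in \partial\Omega \cap \mathcal{C}(Q)$.

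First I choose $y_Q$ to be the point on the Lipschitz graph of $\partial\Omega$ with the same horizontal projection as $x_Q$, i.e.\ $\Pi(y_Q) = \Pi(x_Q)$; since $\mathrm{dist}(x_Q,\partial\Omega) \approx \ell(Q)$, the shift $\theta := |x_Q - y_Q|$ satisfies $\theta \preceq \ell(Q)$. For $\ell(Q)$ small enough the hypothesis of Lemma \ref{shift control doubling} holds, giving the $A$-starshape condition required by Lemma \ref{sshiftcenter}. Applying the latter with radius $R = S\ell(Q)/2$ and shift $\theta$ (so that $\theta/R \preceq 1/S$) yields
\[
N_u(y_Q,\, S\ell(Q)/2) \le (1 + C/S)\, N_u(x_Q,\, S\ell(Q)) + C/S \le 2N + 1.
\]
Next I iterate the boundary almost-monotonicity (Lemma \ref{boundarydoubling}) roughly $k = \lceil \log_2 S \rceil$ times to descend from scale $S\ell(Q)/2$ down to $\ell(Q)/2$. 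The accumulated multiplicative factor is dominated by $\bigl(1 + C\gamma S\ell(Q) + C\omega(16 S\ell(Q))\bigr)^{k}$, which stays $\le 2$ once $r_0$ (and hence $\ell(Q)$) is chosen small enough depending on $N, S, d, L, \Lambda, \gamma, \omega$. This produces a bound $N_u(y_Q,\, \ell(Q)/2) \le N''$ with $N''$ depending only on $N$ (say $N'' = 4N+3$), independent of $S$.

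Then I rescale: set $v(z) := u(y_Q + \ell(Q) z)$ on $\Omega' := \ell(Q)^{-1}(\Omega - y_Q)$ with rescaled coefficient $A'(z) := A(y_Q + \ell(Q) z)$. A direct change of variables in (\ref{defJ1})--(\ref{defJ2}) gives $J_u(y_Q, r) = \ell(Q)^d J_v(0, r/\ell(Q))$, so $N_v(0, 1/2) = N_u(y_Q, \ell(Q)/2) \le N''$. The rescaled coefficient has Lipschitz constant $\gamma\ell(Q)$ and $\Omega'$ is quasiconvex with modulus $\omega'(t) := \omega(\ell(Q) t)$, both arbitrarily small as $r_0$ shrinks. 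Shrinking $r_0$ further so that $\omega'(32\Lambda) \le \omega_0(N'')$ and $\gamma\ell(Q) \le \gamma_0(N'')$, the hypotheses of Lemma \ref{citeabsence} are met for $v$ on $\Omega'$, producing $y' \in \partial\Omega' \cap B_{1/8}$ and $\rho = \rho(N'', d, L, \Lambda) > 0$ with $|v| > 0$ on $B(y', \rho) \cap \Omega'$. Rescaling back, $u$ is nonzero on $B(y_Q + \ell(Q) y',\, \rho\ell(Q)) \cap \Omega$, and the center lies in $\partial\Omega \cap \mathcal{C}(Q)$ because $|\Pi(y_Q + \ell(Q)y') - \Pi(x_Q)| \le \ell(Q)/8$ keeps the horizontal projection inside $\Pi(Q)$.

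The main technical obstacle is the $S$-dependence of the intermediate constants: each shift of center and each iterated monotonicity step introduces an error factor depending on $S$, and the entire proof rests on choosing $r_0$ small enough as a function of $S$ so that these compound factors remain bounded by an absolute constant. This is precisely what allows the final radius $\rho$ to depend only on $N$ (through $N''$) and not on $S$, as the statement of the lemma requires.
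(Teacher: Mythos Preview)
Your proof is correct and follows the same overall strategy as the paper: transfer the doubling-index bound from the interior Whitney center $x_Q$ to its vertical projection on $\partial\Omega$, then rescale and invoke Lemma~\ref{citeabsence}. The only difference is the order of operations. The paper first descends in scale at the interior point $x_Q$ (from $S\ell(Q)$ down to $\mathrm{diam}(C^*WQ)\approx\ell(Q)$, using Lemma~\ref{Keymdoub} roughly $\log_2 S$ times), then shifts the center to $\hat x_Q\in\partial\Omega$ via Lemma~\ref{sshiftcenter} at a scale $\approx\ell(Q)$, and finally applies the boundary monotonicity Lemma~\ref{boundarydoubling} a \emph{bounded} number of times. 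You instead shift first at the large scale $S\ell(Q)/2$ (where $\theta/R\preceq 1/S$ makes the error harmless) and then iterate the boundary monotonicity $\approx\log_2 S$ times. Both routes push the $S$-dependence entirely into the smallness requirement on $r_0$, leaving $\rho$ depending only on $N$, which is the crucial point you correctly identify at the end.
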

\begin{proof}[Proof of Lemma \ref{copykey}]
  When we choose $\ell(R)$ small enough depending on $S,\omega,\gamma, \Lambda,d,$ and $L$, by Lemma \ref{shift control doubling} and Lemma \ref{sshiftcenter}, we can obtain 
  \begin{equation}\label{xqshift1}
      N_u^*(x_Q, \txt{diam}(C^*WQ))\le 2N_u^*(x_Q, S\ell(Q))
  \end{equation}

  On the other hand, we denote $\hat{x}_Q$ as the vertical projection of $x_Q$ on $\p\Omega$, by Lemma \ref{boundarydoubling}, we have that
  \begin{equation}\label{xqshift2}
    N_u^*(\hat{x}_Q, \frac{1}{2}\ell(Q))\le C N_u^*(\hat{x}_Q, \frac{1}{2}\txt{diam}(C^*WQ)) 
  \end{equation}
By Lemma \ref{sshiftcenter}, combining (\ref{xqshift1}) and (\ref{xqshift2}), we have 
\begin{equation}
    \begin{aligned}
        N_u(\hat{x}_Q, \frac{1}{2}\ell(Q))&\le C  N_u^*(\hat{x}_Q, \frac{1}{2}\txt{diam}(C^*WQ)) \\
        &\le C N_u^*(x_Q, \txt{diam}(C^*WQ))\\
        &\le CN_u^*(x_Q, S\ell(Q))\\
        &\le C(N+1).
    \end{aligned}
\end{equation}
Now, we assume that $\ell(Q)$ by rescaling the domain and the Whitney cuboids structure. Hence, we take $\hat{x}_Q$ in place of $0$ and use Lemma \ref{citeabsence}, we can find $\rho$ depending on $N, L, d,\omega,A$.
\end{proof}
To end this section, we restate the above lemma in a language closer to Lemma \ref{keylemma1}.
\begin{lemma}\label{keylemma2}
   Let $\Omega$ be a quasiconvex Lipschitz domain and fix some $x_0\in\p\Omega.$ 
    For any $N>0$ and $S\gg 1$ large enough, there exist positive $r_0=r_0(N, S, d, L,\Lambda,\omega,\gamma)$ and $K=K(N, d, L, \Lambda,\omega,\gamma)$ such that the following statement holds. Let $u$ be A-harmonic in $\Omega$ and $u=0$ on $B(x_0,2r_0)\cap\p\Omega$. Consider $R\in \W$ such that $R\subset B(x_0,\frac{3}{2}r_0)\cap \Omega$. Let $Q$ be a cuboid in $\mathcal{D}_{\W}(R)$ satisfying $N(x_Q, S\ell(Q))\le N,$  Then, for any $\Tilde{K}\ge K,$ there exist cuboids $Q''_{1}, \dots, Q''_{2^{(d-1)(\Tilde{K}-K)}}$ such that, for all $j$,
\begin{itemize}
    \item [(1)] the center of $Q''_{j}$ lies in $\p\Omega\cap B(x_0,2r_0)$,
    \item [(2)] $u|_{Q''_{j}\cap \Omega}$ does not have zeros,
    \item[(3)] there exists $Q'_{j}\in \mathcal{D}_{\W}^{\Tilde{K}}(Q)$ such that $Q''_j$ is a vertical translation of $Q'_j.$
\end{itemize}
   In particular, there exists $\rho_0=\rho_0(N, d, L, A,\omega)>0$ such that $$m_{d-1}(\bigcup_{i}\Pi(Q'_i))\ge \rho_{0}m_{d-1}(\Pi(Q)) .$$
\end{lemma}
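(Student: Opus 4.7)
The plan is to bootstrap Lemma \ref{copykey}, which yields a single non-vanishing ball at scale $\ell(Q)$, into a positive-measure family of non-vanishing cuboids at the finer scale $2^{-\tilde{K}}\ell(Q)$. The idea is to interpose a coarse family $\mathcal{D}_{\W}^{\tilde{K}-K}(Q)$ of $2^{(d-1)(\tilde{K}-K)}$ cuboids: on each coarse cuboid $\tilde{Q}$, apply Lemma \ref{copykey} to obtain a boundary ball $\tilde{B}_{\tilde{Q}}$ on which $u$ does not vanish; then carve out one $Q''_j$ from that ball, where $Q''_j$ is the vertical translate of some $Q'_j \in \mathcal{D}_{\W}^{K}(\tilde{Q}) \subset \mathcal{D}_{\W}^{\tilde{K}}(Q)$ whose center lies on $\p\Omega$. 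The integer $K=K(N,d,L,\Lambda,\omega,\gamma)$ is chosen large enough that the fine scale $2^{-K}\ell(\tilde{Q})$ is dominated by the ball radius $\rho\ell(\tilde{Q})$, so that the carved cuboid genuinely fits inside the ball.

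The central technical step is a uniform-in-$\tilde{K}$ transfer of the doubling index: for every $\tilde{Q}\in\mathcal{D}_{\W}^{\tilde{K}-K}(Q)$, I want $N_u(x_{\tilde{Q}},S\ell(\tilde{Q}))\le N'$ with $N' = N'(N,d,L,\Lambda)$. I chain three moves: first, monotonicity at $x_Q$ (Lemma \ref{Keymdoub}) from scale $S\ell(Q)$ down to $2C^*\sqrt{d}\,\ell(Q)$, valid once $S\ge 2C^*\sqrt{d}$; second, a single application of Lemma \ref{sshiftcenter} at scale $C^*\sqrt{d}\,\ell(Q)$ to pass the base from $x_Q$ to $x_{\tilde{Q}}$, admissible since $|x_Q - x_{\tilde{Q}}|\le\sqrt{d}\,\ell(Q) = R/C^*$; third, monotonicity at $x_{\tilde{Q}}$ from $C^*\sqrt{d}\,\ell(Q)$ down to $S\ell(\tilde{Q})$. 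Even though this last leg involves $O(\tilde{K}-K)$ halvings, the multiplicative errors telescope as $\prod_{i\ge 0}(1+C\gamma 2^{-i}C^*\sqrt{d}\,\ell(Q))\le \exp(2CC^*\sqrt{d}\gamma\ell(Q))\le 2$ once $\ell(Q)$ is small, and the additive errors likewise form a convergent geometric series; the $A$-starshape hypothesis at each base point is furnished by Lemma \ref{shift control doubling}. It is here that the dependence $r_0=r_0(N,S,d,L,\Lambda,\omega,\gamma)$ enters.

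With $N_u(x_{\tilde{Q}},S\ell(\tilde{Q}))\le N'$ in hand, Lemma \ref{copykey} delivers a ball $\tilde{B}_{\tilde{Q}}$ of radius $\rho\ell(\tilde{Q})$ centered at some $y_{\tilde{Q}}\in\p\Omega\cap\mathcal{C}(\tilde{Q})$ with $u\ne 0$ on $\tilde{B}_{\tilde{Q}}\cap\Omega$, where $\rho = \rho(N',d,L,\Lambda,\omega,\gamma)$. Since $\{\Pi(Q'):Q'\in\mathcal{D}_{\W}^K(\tilde{Q})\}$ partitions $\Pi(\tilde{Q})$, there is a unique $Q'_j\in\mathcal{D}_{\W}^K(\tilde{Q})$ with $\Pi(y_{\tilde{Q}})\in\Pi(Q'_j)$; define $Q''_j$ to be the vertical translate of $Q'_j$ whose center sits at $(c_j,\phi(c_j))\in\p\Omega$, where $c_j$ is the horizontal center of $\Pi(Q'_j)$. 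The Lipschitz graph property bounds both the diameter of $Q''_j$ and the distance $|y_{\tilde{Q}}-(c_j,\phi(c_j))|$ by $C_{d,L}\,2^{-K}\ell(\tilde{Q})$; fixing $K$ as the smallest integer with $2^{-K}\le\rho/(2C_{d,L})$ forces $Q''_j\subset\tilde{B}_{\tilde{Q}}$, giving (2). Properties (1) and (3) are immediate from the construction, and the $\Pi(Q'_j)$'s, coming from distinct coarse cuboids, are pairwise disjoint, so
$$m_{d-1}\Bigl(\bigcup_j\Pi(Q'_j)\Bigr) = 2^{(d-1)(\tilde{K}-K)}\cdot(2^{-\tilde{K}}\ell(Q))^{d-1} = 2^{-(d-1)K}\,m_{d-1}(\Pi(Q)),$$
yielding $\rho_0 = 2^{-(d-1)K}$. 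The principal obstacle is the uniform-in-$\tilde{K}$ doubling index bound, whose resolution depends entirely on the geometric decay of the quasi-monotonicity errors at each scale halving; everything else is essentially a packaging of Lemma \ref{copykey} at the coarse scale together with a geometric fitting argument.
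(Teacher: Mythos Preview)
Your proposal takes a genuinely different route from the paper and, as written, contains a real gap.

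\textbf{What the paper does.} The paper's argument is a one–step reduction: apply Lemma \ref{copykey} \emph{once} to the cuboid $Q$ itself (the hypothesis $N_u(x_Q,S\ell(Q))\le N$ is already available), obtaining a single ball $\tilde B$ of radius $\rho\ell(Q)$ centered at some $y\in\p\Omega\cap\mathcal{C}(Q)$ with $|u|>0$ on $\tilde B\cap\Omega$. Then one picks the $(d-1)$–dyadic cube $\Pi(P_0)$ of side $2^{-K}\ell(Q)$ containing $\Pi(y)$, chooses $K=K(\rho,d,L)$ so large that every vertical translate $t(Q'_j)$ with $\Pi(Q'_j)\subset\Pi(P_0)$ and $Q'_j\in\mathcal{D}_\W^{\tilde K}(Q)$ lies inside $\tilde B$, and these $2^{(d-1)(\tilde K-K)}$ cuboids are the $Q''_j$. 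No doubling–index transfer is needed at all.

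\textbf{The gap in your step 3.} Your approach instead passes to an intermediate family $\tilde Q\in\mathcal{D}_\W^{\tilde K-K}(Q)$ and applies Lemma \ref{copykey} at each $\tilde Q$; this forces you to propagate the bound on $N_u$ from $x_Q$ down to $x_{\tilde Q}$ at scale $S\ell(\tilde Q)$. In your ``third move'' you invoke the interior monotonicity Lemma \ref{Keymdoub} at the base point $x_{\tilde Q}$ for radii ranging from $S\ell(\tilde Q)$ up to $\approx C^*\sqrt{d}\,\ell(Q)$, and claim that Lemma \ref{shift control doubling} supplies the required $A$–starshapedness. It does not, uniformly in $\tilde K$. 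Inspecting the proof of Lemma \ref{shift control doubling}, the $A$–starshapedness of $B(x,r)\cap\Omega$ amounts to
\[
\frac{\dist(x,\p\Omega)}{\sqrt{1+L^2}}\;\gtrsim\; r\,\omega(Cr)+\gamma\Lambda r^{2}.
\]
For $x=x_{\tilde Q}$ one has $\dist(x_{\tilde Q},\p\Omega)\approx\ell(\tilde Q)=2^{-(\tilde K-K)}\ell(Q)$, while at the top of your chain $r\approx\ell(Q)$. Thus you would need
\[
2^{-(\tilde K-K)}\ell(Q)\;\gtrsim\;\ell(Q)\,\omega(C\ell(Q))+\gamma\ell(Q)^2,
\]
which fails once $\tilde K$ is large enough, since the right–hand side is a fixed positive number (for general quasiconvex $\omega$ and $\gamma>0$) while the left–hand side tends to $0$. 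Equivalently, in the language of Lemma \ref{shift control doubling}, either the parameter $T$ or the parameter $S$ would have to grow like $2^{\tilde K-K}$, destroying condition \eqref{smallcon}; so your claim that $r_0$ can be chosen independently of $\tilde K$ breaks down precisely here. A staircase variant (alternating one shift with one halving along the chain $Q=P_0\supset P_1\supset\cdots\supset P_{\tilde K-K}=\tilde Q$) avoids this obstacle but accrues a multiplicative factor $(1+C/S)^{\tilde K-K}$ from Lemma \ref{sshiftcenter}, which again is not uniform in $\tilde K$.

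\textbf{Remedy.} The cleanest fix is to abandon the intermediate layer: apply Lemma \ref{copykey} directly to $Q$ and carve all $2^{(d-1)(\tilde K-K)}$ cuboids out of the single ball $\tilde B$, exactly as the paper does. Your measure computation $\rho_0=2^{-(d-1)K}$ then goes through unchanged.
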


These cuboids $Q''_j$ are the cuboids that are contained in the ball $\Tilde{B}$ given by Lemma \ref{copykey} and are vertical translation of cuboids in $ \mathcal{D}_{\W}^{\Tilde{K}}(Q)$. From now on, given a cuboid $Q\in\mathcal{D}_{\W}(R)$, we denote by $t(Q)$ the unique cuboid $Q'$ such that its center lies on $\p\Omega\cap B(x_0,2r_0)$ and $Q'$ is a vertical translation of $Q.$

\section{PROOF OF THEOREM 1.3\label{sectionoftheorem}}
Let $R$ be a cuboid in $\W$ such that it satisfies the condition of Lemma \ref{keylemma1} and Lemma \ref{keylemma2} with $S_1$ large enough (we will specify it later).
The use of Lemma \ref{keylemma1} give us constants $K_1=K_1(S_1), \delta_1:=\delta$ (that does not depend on $S_1$). Consider Lemma \ref{keylemma2} with  fixed $N=N_0$ (where $N_0$ is the constant of Lemma \ref{keylemma1}) and $S_2$ . This lemma gives us constants $K_2$ and $\delta_2:=\rho_0$ (both does not depend on $S_2$). In particular, we may assume $S_1$ larger enough than $S_2$ such that $K_2$ is smaller or equal than $K_1$. For the rest of this section, we will denote $\epsilon:=CS_1^{-1}, K=K_1(S_1)$ and $\delta_0=\txt{min}(\delta_1,\delta_2).$ We point out that the combination techniques employed below are similar to \cite[Section 6]{G22}. 

Define 
$$ N(Q):=N_u^*(x_Q, S_1\ell(Q))$$ for any $Q\in \mathcal{D}_{\W}^{jK}(R)$ for $j\ge 0.$ We define the modified doubling index $N'(Q)$ for $Q\in \mathcal{D}_{\W}^{jK}(R)$ for $j\ge 0,$ inductively. For $i=0$, we define $N'(R)=\txt{max}(N(R),\frac{N_0}{2}).$ Assume we have $N'(P)$ defined for all cubes $P\in \mathcal{D}_{\W}^{iK}(R)$ for $0\le i<j.$ Fix $\hat{Q}\in \mathcal{D}_{\W}^{(j-1)K}(R)$ and consider its vertical translation $t(\hat{Q})$ centered on $B\cap \p\Omega.$ Then:
\begin{itemize}
    \item[(a)]  if $u$ restricted to $t(\hat{Q})\cap\Omega$ has no zeros, define $N'(Q)=\frac{N'(\hat{Q})}{2}$ for $[\delta_0\txt{card}\left \{ \mathcal{D}_{\W}^K(\hat{Q})\right \} ]$ of the cubes $Q$ in $\mathcal{D}_{\W}^K(\hat{Q})$ and $N'(Q)=(1+\epsilon)N'(\hat{Q})$ for the rest of cubes in $\mathcal{D}_{\W}^K(\hat{Q})$ (the particular choice is irrelevant),
    \item[(b)] if $u$ restricted to $t(\hat{Q})\cap\Omega$ has zeros, choose $Q\in \mathcal{D}_{\W}^K(\hat{Q}),$ and 
    \begin{itemize}
        \item [1.] if its vertical translation $t(Q)$ satisfies that $u$ restricted to $t(Q)\cap\Omega$ has no zeros, define $N'(Q)=\frac{N'(\hat{Q})}{2}$,
        \item[2.] else define $N'(Q)=\txt{max}(N(Q),N_0/2).$
    \end{itemize}
\end{itemize}
Note that if a cuboid $Q$ satisfies that $u$ restricted to $t(Q)\cap\Omega$ has no zeros, then all its descendants in the Whitney cuboid structure will satisfy the same property and (a) applies to them. We point out that if $Q\in \mathcal{D}_{\W}^K(\hat{Q})$, then by the definition of cuboids, we can obtain that $t(Q)\subset t(\hat{Q}).$ On the other hand, if a cube $Q$ satisfies that $u$ restricted to $t(Q)\cap\Omega$ changes sign, then all its predecessors in the Whitney cuboid structure will satisfy the same and (b2) applies to them.

Now, a combination of Lemma \ref{keylemma1} and Lemma \ref{keylemma2} yields the following behavior for $N'(Q)$ for $Q\in\mathcal{D}_{\W}^{jK}(R), j\ge 0.$ Consider a cuboid $\hat{Q}$ and its vertical translation $t(\hat{Q}).$ Then
\begin{itemize}[label=\textbf{\textbullet}]
    \item If $u$ restricted to $t(\hat{Q})\cap\Omega$ has zeros and $N(\hat{Q})\ge N_0,$ then Lemma \ref{keylemma1} tells us at least $[\delta_0\text{card}\left \{ \mathcal{D}_{\W}^{K}(\hat{Q}) \right \} ]$ cuboids in $\mathcal{D}_{\W}^{K}(\hat{Q})$ satisfy $N(Q)\le N(\hat{Q})/2.$ Moreover, in this case, (b1) implies $N'(Q)=\frac{N'(\hat{Q})}{2}$. (b2) implies $N'(Q)=\text{max}(N(Q),N_0/2)\le N(\hat{Q})/2=N'(\hat{Q})/2$ where we have used that $N(\hat{Q})\ge N_0$. For the rest cuboids $Q$ in $\mathcal{D}_{\W}^{K}(\hat{Q}),$ (b1) implies $N'(Q)=\frac{N'(\hat{Q})}{2}\le (1+\epsilon)N'(\hat{Q})$,  (b2) implies $N'(Q)=\txt{max}(N(Q),N_0/2)\le (1+\epsilon)N(\hat{Q})=(1+\epsilon)N'(\hat{Q})$ where we have used again that $N(\hat{Q})>N_0$.
    \item If $u$ restricted to $t(\hat{Q})\cap\Omega$ has zeros and $N(\hat{Q})< N_0,$ then Lemma \ref{keylemma2} enters in play and it tells us that at least $[\delta_0\text{card}\left \{ \mathcal{D}_{\W}^{K}(\hat{Q}) \right \} ]$ cuboids $Q$ in $\mathcal{D}_{\W}^{K}(\hat{Q})$ satisfy that $t(Q)\cap\Omega$ does not contain zeros of $u.$ For these cuboids, $N'(Q)=\frac{1}{2}N'(\hat{Q})=\frac{1}{2}\txt{max}(N(\hat{Q}),N_0/2)<N_0/2.$ For the rest of cuboids $Q$ in $\mathcal{D}_{\W}^{K}(\hat{Q})$, we have $N'(Q)=\txt{max}(N(Q),N_0/2)\le\txt{max}((1+\epsilon)N(\hat{Q}),N_0/2)\le (1+\epsilon)N'(\hat{Q})$.
    \item If $u$ restricted to $t(\hat{Q})\cap\Omega$ has no zeros, then we have defined $N'(Q)=N'(\hat{Q})/2$ for $[\delta_0\text{card}\left \{ \mathcal{D}_{\W}^{K}(\hat{Q}) \right \} ]$ cuboids in $\mathcal{D}_{\W}^{K}(\hat{Q})$. We have defined $N'(Q)=(1+\epsilon)N'(\hat{Q})$ for the rest of cuboids $Q$ in $\mathcal{D}_{\W}^{K}(\hat{Q})$.
\end{itemize}
Summing up, for $\hat{Q}\in \mathcal{D}_{\W}(R)$ and random $Q\in \mathcal{D}_{\W}^{K}(\hat{Q})$, we have 
\begin{equation*}
N'(Q) \le
\begin{cases}
\begin{aligned}
& N'(\hat{Q})/2, \quad \text{with probability at least }\delta_0. \\
& N'(\hat{Q})(1+\epsilon), \quad \text{with probability at most }1-\delta_0.
\end{aligned}
\end{cases}
\end{equation*}
 This is similar to the behavior of the doubling index $N$ given by Lemma \ref{keylemma1} but without the restriction $N(\hat{Q})\ge N_0.$
\begin{proof}[Proof of Theorem \ref{Minskithm1}]
First, let us explain how $r_0$ is determined.
Let $\alpha(\delta_0)>0$ be such that 
\begin{equation}\label{alphasmall}
     \frac{\delta_0}{1-\delta_0}\frac{1-\alpha}{\alpha}=3
\end{equation}
(in particular, $\alpha<\delta_0$) and $\epsilon_0(\alpha)$ such that $$ \alpha=\frac{\log(1+\epsilon_0)}{\log(1+\epsilon_0)+\log2}.$$
Now, we pick $\epsilon<\epsilon_0$ (equivalently $S_1$ large enough, recall the discussion in section 3). Observe that by choosing $\epsilon$ we also fix $K$ and $M_0.$ 
In this way, we can find \(r_0\) such that all the conditions stated in Lemma \ref{keylemma1} and Lemma \ref{keylemma2} are satisfied. 

Recall the definition in section \ref{Whitneydecomposition}, we will prove the result for the projection of a single cube $\Pi(R)$. Afterwards, we can cover any compact in $\Sigma$ by a finite union of such cuboids which leaves stable the Minkowski dimension estimate. (In fact, we can cover $\Sigma$ by finite union of $B_0$ defined in section \ref{Whitneydecomposition} and then $\Pi(B_0\cap\p\Omega)$ can be coverd by finite union of $\Pi(R)$).

For $x\in \Pi(R),$ we denote by $Q_j(x)$ the unique cuboid $Q_j\in \mathcal{D}_{\W}^{jK}(R)$ such that $x\in\Pi(Q_j)$. we will say $Q_j(x)$ is good cuboid if $N'(Q_j\le \frac{1}{2}N'(Q_j-1)$ and that is bad otherwise.
\begin{remark}
    Note that, with the previous definitions, $N'(Q)<\frac{N_0}{2}$ implies that for all $x\in t(Q)\cap\Sigma$ there is a neighborhood where $u$ does not vanish in. Thus we only need to study the Minkowski dimension of the set of points $x\in \Pi(R)$ such that they are not in $\Pi(Q)$ for some $Q\in\mathcal{D}_{\W}(R)$ with $N'(Q)<\frac{1}{2}N_0.$ Also, notice that the map $\Pi :\Sigma\cap\Pi^{-1}(\Pi(R))\mapsto \Pi(R) $  is bilipschitz and thus it preserves Minkowski dimensions.
\end{remark}

We define the goodness frequency of a point $x\in \Pi(R)$ as $$ F_j(x)=\frac{1}{j}\#\left \{\txt{good cubes in }Q_1(x), \dots, Q_j(x)  \right \} \quad \txt{for }j\in\mathbb{N}.$$
We define $\Bar{F}(x)=\limsup_{n\to\infty}F_n(x)$. 
 For all $j>1,$ define $$ \mu_j=\frac{1}{j}\log_2(\frac{2N'(R)}{N_0})\ge 0.$$
 Notice that $\mu_j\to 0$ as $j\to\infty.$
 \begin{claim}
      For $j>1,$ the following holds $$ F_j(x)\ge \alpha+\mu_j\Longrightarrow N'(Q_j(x))<\frac{N_0}{2}.$$
   \end{claim}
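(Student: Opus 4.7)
The plan is to iterate the generation-to-generation transition rule along the chain $R = Q_0(x) \supset Q_1(x) \supset \cdots \supset Q_j(x)$. The three bulleted cases summarising Lemma \ref{keylemma1} and Lemma \ref{keylemma2} show that in every situation each good step divides $N'$ by $2$, while each bad step multiplies $N'$ by at most $1+\epsilon$. Writing $g_j := j F_j(x)$ for the number of good indices and $b_j := j - g_j$ for the number of bad indices among $1,\ldots,j$, iterating yields
\begin{equation*}
N'(Q_j(x)) \le N'(R) \cdot 2^{-g_j}(1+\epsilon)^{b_j}.
\end{equation*}

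Next, I would take $\log_2$ of both sides and divide by $j$. Using $\log_2(2N'(R)/N_0) = j\mu_j$ from the definition of $\mu_j$, the target inequality $N'(Q_j(x)) < N_0/2$ reduces to the scalar statement
\begin{equation*}
F_j(x) - (1 - F_j(x)) \log_2(1+\epsilon) > \mu_j.
\end{equation*}

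Finally I would insert the definition of $\alpha$. Rewriting $\alpha = \log(1+\epsilon_0)/(\log(1+\epsilon_0)+\log 2)$ gives $\log_2(1+\epsilon_0) = \alpha/(1-\alpha)$, so the earlier choice $\epsilon < \epsilon_0$ yields the strict bound $\log_2(1+\epsilon) < \alpha/(1-\alpha)$. Substituting this and the hypothesis $F_j(x) \ge \alpha + \mu_j$ gives, after a short algebraic simplification,
\begin{equation*}
F_j(x) - (1 - F_j(x)) \log_2(1+\epsilon) \ge \frac{F_j(x) - \alpha}{1 - \alpha} \ge \frac{\mu_j}{1-\alpha} \ge \mu_j.
\end{equation*}
The argument presents no serious obstacle; the only care required is to verify strictness of the final chain of inequalities. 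In the case $\mu_j > 0$, strictness is forced by $0 < \alpha < 1$, which makes $\mu_j/(1-\alpha) > \mu_j$; in the case $\mu_j = 0$, the hypothesis becomes $F_j(x) \ge \alpha > 0$, so $1 - F_j(x) < 1$ and the strict bound $\log_2(1+\epsilon) < \alpha/(1-\alpha)$ propagates through to a strict sign. Either way, the scalar inequality in the second paragraph holds strictly, and the claim follows.
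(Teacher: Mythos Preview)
Your proof is correct and follows essentially the same route as the paper: iterate the transition bound $N'(Q_i)\le 2^{-1}N'(Q_{i-1})$ or $N'(Q_i)\le(1+\epsilon)N'(Q_{i-1})$ along the chain, then use the algebraic identity $\log_2(1+\epsilon_0)=\alpha/(1-\alpha)$ (equivalently the paper's $(\tfrac12)^\alpha(1+\epsilon)^{1-\alpha}<1$) together with $\epsilon<\epsilon_0$ to conclude. One small slip in your strictness discussion for $\mu_j=0$: you write ``$1-F_j(x)<1$'', but what you actually need for the strict bound to propagate is $1-F_j(x)>0$; the leftover case $F_j(x)=1$ is harmless since then $N'(Q_j(x))\le 2^{-j}N'(R)=2^{-j}N_0/2<N_0/2$.
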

\begin{proof}[Proof of claim]
Note that for any $0<\epsilon<\epsilon_0,$ we have 
\begin{equation}\label{choosecondition}
   (\frac{1}{2})^\alpha(1+\epsilon)^{1-\alpha}<1. 
\end{equation}
We have that $$ N'(Q_j(x))\le (\frac{1}{2})^{j(\alpha+\mu_j)}(1+\epsilon)^{j(1-\alpha-\mu_j)}N'(R)<(\frac{1}{2})^{j\mu_j}(1+\epsilon)^{-j\mu_j}N'(R)<\frac{N_0}{2}$$
by (\ref{choosecondition}) and the definition of $F_j(x)$ and $\mu_j.$   
\end{proof}

Now, we can reduce the problem to studying the Minkowski dimension of the set of points $$ E=\left \{x\in \Pi(R)|F_j(x)<\alpha+\mu_j,\txt{ for all } j\in\mathbb{N}. \right \}. $$

If we consider a random sequence of cuboids $(Q_j)_j$ with $Q_0=R$ and $Q_j\in\mathcal{D}_{\W}^{K}(Q_{j-1}),$ and let $x\in \bigcap_{j\ge 0}\Pi(Q_j),$ the probability that $F_j(x)\le \beta_j$ for $j\in\mathbb{N}$ and $\beta_j=\alpha+\mu_j\in (0,1)$ is bounded above by $$ \sum_{i=0}^{[j\beta_j]} \binom{j}{i}\delta _0^i(1-\delta _0)^{j-i}.$$
It is important to note that the probability is uniform for any \(x\) in \(\Pi(R)\).
In what follows, by \ref{alphasmall}, we will assume that $\beta_j$ satisfy $2<\frac{\delta_0}{1-\delta_0}\frac{1-\beta_j}{\beta_j}<4$ for $j$ big enough, in particular 
$\beta_j<\delta_0$ for large $j$. Let us find an upper bound for the previous quantity for very large $j$:
$$ A_j:=\sum_{i=0}^{[j\beta_j]} \binom{j}{i}\delta _0^i(1-\delta _0)^{j-i}=(1-\delta_0)^j\sum_{i=0}^{[j\beta_j]} \binom{j}{i}(\frac{\delta_0}{1-\delta_0})^{j-i}.$$
Observe that 
$$ \binom{j}{k-1}<\frac{\beta_j}{1-\beta_j}\binom{j}{k},\quad \txt{ for }0<k\le [j\beta_j].$$
This because $$ \frac{\binom{j}{k-1}}{\binom{j}{k}}=\frac{k!(j-k)!}{(k-1)!(j-k+1)!}=\frac{k}{j-k+1}\le \frac{[j\beta_j]}{j-[j\beta_j]+1}<\frac{\beta_j}{1-\beta_j}.$$
Iterating this inequality, we obtain $$\binom{j}{i}<(\frac{\beta_j}{1-\beta_j})^{[j\beta_j]-i}\binom{j}{[j\beta_j]},\quad \txt{for }i<[j\beta_j].$$
Using  this observation we can bound $A_j$ by 
$$ (1-\delta_0)^{j}\sum_{i=0}^{[j\beta_j]}\binom{j}{i}(\frac{\delta_0}{1-\delta_0})^i\le (1-\delta_0)^j(\frac{\beta_j}{1-\beta_j})^{[j\beta_j]}\binom{j}{[j\beta_j]}\sum_{i=0}^{[j\beta_j]}(\frac{\delta_0}{1-\delta_0}\frac{1-\beta_j}{\beta_j})^i. $$
We use Stirling's formula to approximate for $j$ big enough:
\begin{equation*}
    \begin{aligned}
        \binom{j}{[j\beta_j]}&\approx \frac{\sqrt{2\pi j}(\frac{j}{e})^j}{\sqrt{2\pi [j\beta_j]}(\frac{[j\beta_j]}{e})^{[j\beta_j]}\sqrt{2\pi (j-[j\beta_j])}(\frac{j-[j\beta_j]}{e})^{j-[j\beta_j]}}\\
        &\preceq \frac{\sqrt{2\pi j}(\frac{j}{e})^j}{\sqrt{2\pi j\beta_j}(\frac{j\beta_j}{e})^{j\beta_j}\sqrt{2\pi j(1-j\beta_j)}(\frac{j(1-\beta_j)}{e})^{j(1-\beta_j)}}\\
        &\preceq \frac{1}{\sqrt{2\pi j\beta_j(1-\beta_j)}}(\frac{1}{\beta_j^{\beta_j}(1-\beta_j)^{1-\beta_j}})^j.
    \end{aligned}
\end{equation*}
We also estimate $$ (\frac{\beta_j}{1-\beta_j})^{[j\beta_j]}\preceq (\frac{\beta_j}{1-\beta_j})^{j\beta_j},$$ and $$\sum_{i=0}^{[j\beta_j]}(\frac{\delta_0}{1-\delta_0}\frac{1-\beta_j}{\beta_j})^i\approx (\frac{\delta_0}{1-\delta_0}\frac{1-\beta_j}{\beta_j})^{j\beta_j}. $$

Summing everything up, we obtain
$$ A_j\preceq \frac{2}{\sqrt{2\pi j\beta_j(1-\beta_j)}}(\frac{\delta _0^{\beta_j}(1-\delta_0)^{1-\beta_j}}{\beta_j^{\beta_j}(1-\beta_j)^{1-\beta_j}})^j,$$
for $j$ large enough. We define $z(\beta_j)=\frac{\delta _0^{\beta_j}(1-\delta_0)^{1-\beta_j}}{\beta_j^{\beta_j}(1-\beta_j)^{1-\beta_j}}.$ Observe that $z(\beta_j)<1$ for $\beta_j<\delta_0.$

Now, we will choose a suitable covering of the set $E$ by projection of cuboids in $\mathcal{D}_{\W}^{jK}(R).$ 
For $j\ge 1,$ set 
$$ E_j:=\left \{ x\in\Pi(R)| F_j(x)\le \alpha+\mu_j \right \}, $$ so that $E=\bigcap_j E_j.$ Let's upper bound the Minkowski dimension of $E$ by finding a certain cover $E_j$ by projections of cuboids in $\mathcal{D}_{\W}^{jK}(R)$ (note that there are $M=2^{(d-1)K}$ cuboids in $\mathcal{D}_{\W}^{K}(R)).$ For any $Q'\in \Pi(\mathcal{D}_{\W}^{jK}(R))$, we point out that either $Q'\cap E_j=\emptyset$ or $Q'\cap E_j=Q'.$

By employing the earlier asymptotic results, according to (\ref{alphasmall}), we can conclude that \(\beta_j < \delta_0\) for sufficiently large values of \(j\). Hence, we can cover $E_j$ (for $j$ large enough) with 
$$ CM^j\frac{2}{\sqrt{2\pi j(\alpha+\mu_j)(1-\alpha-\mu_j)}}z(\alpha+\mu_j)^j$$ projections of cuboids in $\mathcal{D}_{\W}^{jK}(R)$ and each of those cuboids has side length $M^{-j(d-1)}.$

Now we are ready to upper bound the Minkowski dimension of the set $E.$ We will use the following definition of upper Minkowski dimension 
$$ \text{dim}_{\overline{\mathcal{M}}}E=\limsup_{j\to \infty } \frac{\log\left(\#\left\{\text{$M$-dyadic cuboids $Q$ of side length $M^{-j}$ that satisfy $Q \cap E \ne \emptyset$}\right\}\right)}{j\log M} .$$
which is equivalent to the dyadic one in (\ref{defMinkovski}). By covering a single set $E_j\supset E$ and making $j\to\infty,$ we obtain the following upper Minkowski dimension estimate 
\begin{equation*}
    \begin{aligned}
      \text{dim}_{\overline{\mathcal{M}}}E&\le  \lim_{j\to\infty} \frac{j(\log M +\log z(\alpha+\mu_j))+\log (\frac{2}{\sqrt{2j\pi(\alpha+\mu_j)(1-\alpha-\mu_j)}})}{(d-1)^{-1}j\log M}\\
      &=(d-1)\frac{\log M+\log z(\alpha)}{\log M}<d-1
    \end{aligned}
\end{equation*}
 since $z(\alpha)<1.$
\end{proof}

We are now ready to present the proof of Corollary \ref{quasiconvexLinconjecture}. Theorem \ref{Minskithm1} indicates that we can decompose \(\Sigma\) into its intersection with a countable family of balls \((B_i)_i\) and a set of Hausdorff dimension smaller than \(d-1\) by exhaustively covering \(\Sigma\) with compacts. Utilizing countable additivity, our objective simplifies to proving that 
\[ \mathcal{H}^{d-1}\left(\left \{ x\in\Sigma \,|\, \partial_\nu u(x)=0 \right \} \cap B \right) = 0 \]
for any ball \(B \in (B_i)_i\) as defined in the decomposition from Theorem \ref{Minskithm1}.
Prior to commencing the proof, we introduce the concept of a $\mathcal{A}_\infty$ weight as defined in \cite{G22}.
\begin{definition}
    We say that  a measure $\mu\in\mathcal{A}_\infty(\sigma)$ if there exist $0<\lambda_1,\lambda_2<1$ such that for all balls $B$ and subset $E\subset B,\sigma(E)\le \lambda_1\sigma(B)$ implies $\mu(E)\le \lambda_2\mu(B).$
\end{definition}
\begin{proof}[Proof of Corollary \ref{quasiconvexLinconjecture}]
   Let \(B\) be centered on \(\Sigma\) such that \(u|_{B\cap\Omega}\) maintains a consistent sign. Without loss of generality, we assume \(u\) to be positive. 
   
   According to Dahlberg's theorem \cite{Da77}, the harmonic measure for the domain \(B \cap \Omega\) serves as an \(\mathcal{A}_\infty\) weight concerning surface measure. As stated in \cite{FKP91}, given the matrix's uniform ellipticity and Lipschitz coefficients, its corresponding elliptic measure \(\mu_A\) also qualifies as another \(\mathcal{A}_\infty\) weight. Notably, this implies that the density \(\frac{d\mu_A}{d\sigma}\) can only vanish within a set of surface measure zero.

   On the other hand, the density of elliptic measure is comparable with $(A\nabla G,\nu)$ at the boundary (where $G$ is the Green function with pole outside $2B$). We refer to \cite{K94} for the definition and properties of elliptic measures. Since $u$ is positive in $B\cap\Omega$, by the comparison principle for positive solutions,  we have that $A\nabla u$ on $\Sigma\cap B$ is comparable to $A\nabla G.$ This conclusion concludes the proof.
\end{proof}
\begin{remark}
    An alternative method for establishing Corollary \ref{quasiconvexLinconjecture} can be found in \cite{T21}. The ingredients are Lemma \ref{keylemma1}, and \cite[Lemma 0.2]{AE97}.  Notably, the techniques outlined in Section \ref{sectionnozeros} are not imperative for deriving this result. 
\end{remark}

\bibliographystyle{abbrv}
\bibliography{main}
\end{document}